\documentclass[12pt,reqno,tbtags]{amsart}
\usepackage[utf8]{inputenc}
\usepackage[T1]{fontenc}

\usepackage{amssymb}
\usepackage{amsthm}
\usepackage{amsmath, bm}
\usepackage{bbm}
\usepackage{mathrsfs}
\usepackage{txfonts}
\usepackage{graphicx}
\usepackage{geometry}
\usepackage{cite} 
\usepackage{float}
\usepackage{hyperref}

\usepackage{esint}
\usepackage{pgfplots}
\pgfplotsset{compat=1.18}
\usepackage{tikz-cd}
\usepackage{mdframed}
\usetikzlibrary{intersections}

\newtheorem{theorem}{Theorem}[section]
\newtheorem{proposition}{Proposition}[section]

\newtheorem{example}{Example}[section]
\newtheorem{question}{Question}[section]     
\newtheorem{lemma}{Lemma}[section]
\newtheorem{remark}{Remark}[section]

\allowdisplaybreaks[4]

\def\R{\mathbb{R}}

\def\L{\mathcal{L}}
\def\Z{\mathbb{Z}}
\def\d{\mathrm{d}}

\newcommand\norm[1]{\|#1\|}
\newcommand\inpro[2]{\langle #1,#2\rangle}
\newcommand\Inpro[3]{\langle #1,#2\rangle_{#3}}

\newcommand\1{\mathbbm{1}}
\newcommand\Norm[2]{\left\|#1\right\|_{#2}}

\newcommand{\eps}{\varepsilon}
\def\d{\mathrm{d}}

\begin{document}

\title[Sets with no Riesz bases of exponentials]{Sets with no Riesz bases of exponentials}

\author{Zhiqiang Wan}
\thanks{}
\address{School of Mathematical Sciences, University of Science and Technology of China, No. 96 Jinzhai Road, Baohe District, Hefei, Anhui Province, China}
\email{ZhiQiang\_Wan576@mail.ustc.edu.cn}
\subjclass[2020]{42C15}

\date{\today}

\begin{abstract}
We prove that sets in a certain class do not admit Riesz bases of exponentials.
In particular, this class contains disks and triangles in the plane.
\end{abstract}

\maketitle

\section{Introduction}
Representing functions by simple oscillations is a basic idea in analysis and its
applications. It is natural to ask that such representations be unique and stable.
Classical Fourier series have these properties, but an
orthogonal expansion in exponentials is not available on every domain. Riesz bases
provide a natural way to preserve these properties without requiring orthogonality.

The orthogonal case is closely related to Fuglede's spectral set conjecture.
A bounded measurable set $S\subset\R^d$ of positive measure is called
\emph{spectral} if $L^2(S)$ admits an orthogonal basis of exponentials.
Fuglede \cite{F74} conjectured that $S$ is spectral if and only if it tiles
$\R^d$ by translations, up to a set of measure zero.
The conjecture holds for unions of two intervals by
{\fontencoding{T1}\selectfont\symbol{138}}aba \cite{La01}, and for convex bodies
in all dimensions by Lev and Matolcsi \cite{LM22}.
For general sets, Tao \cite{T04} constructed spectral sets which do not tile
by translations in every dimension $d\ge5$.
More recently, Greenfeld and Kolountzakis \cite{GK26} showed that both directions
of the conjecture fail even for connected sets in sufficiently high dimensions.

The proof for convex bodies uses a necessary condition called weak tiling.
In this condition, the indicator of the complement equals an integral of
translated indicators of the set, almost everywhere. The integral is taken
against a positive, locally finite Borel measure \cite{LM22}.
Kolountzakis, Lev and Matolcsi developed this method further, with applications
to nonconvex polytopes, product sets and Cantor sets of positive measure
\cite{KLM23}. They also obtained geometric restrictions on convex polytopes
and on the lengths of the gaps between intervals \cite{KLM25}.
The condition is not sufficient for spectrality. Kiss, Londner, Matolcsi and
Somlai \cite{KLMS26} constructed a set which weakly tiles its complement
but is neither spectral nor a translational tile.

We now turn to exponential Riesz bases. Let us first recall that a complete system
$\{u_n\}\subset H$ in a separable Hilbert space is a \emph{Riesz basis}
if there exists $K\ge1$ such that
\begin{equation}\label{eq:riesz-basis}
    \frac{1}{K}\sum_n|c_n|^2
    \le \Norm{\sum_n c_nu_n}{H}^{2}
    \le K\sum_n|c_n|^2
\end{equation}
for every finitely supported scalar sequence $\{c_n\}$.
For a measurable set $S\subset\R^d$ with $0<|S|<\infty$, we call
$E(\Lambda):=\{e^{2\pi i\inpro{\lambda}{x}}:\lambda\in\Lambda\}$,
where $\Lambda\subset\R^d$ is countable, an \emph{exponential Riesz basis}
if it is a Riesz basis in $L^2(S)$.

Classical estimates and stability results for nonharmonic Fourier series
can be found in \cite{I36,K64}. For disconnected sets, Kozma and Nitzan proved that finite
unions of bounded intervals \cite{KN15}, and of bounded axis-parallel boxes
\cite{KN16}, admit exponential Riesz bases.
Debernardi and Lev \cite{DL22} proved existence for centrally symmetric convex
polytopes with centrally symmetric faces of all dimensions.
For recent constructions and quantitative results, we refer to
\cite{AD26,BM24,L24,PRW24}.

Kozma, Nitzan and Olevskii \cite{KNO23} constructed a bounded set of positive measure
in $\R$ with no exponential Riesz basis, using translations and energy estimates.
Motivated by this result, we study sets with no Riesz bases of exponentials.
In fact, even for basic planar domains,
the existence of exponential Riesz bases is a
well-known open problem. Two prominent examples are the disk and the triangle,
discussed in \cite{KNO23}. We state these questions as follows.

\begin{question}\label{ques:disk-triangle}
Let $D\subset\R^2$ be a disk of positive radius, and let $\Delta\subset\R^2$
be a nondegenerate triangle.
\begin{enumerate}
    \item[(i)] Does $L^2(D)$ admit a Riesz basis of exponentials?
    \item[(ii)] Does $L^2(\Delta)$ admit a Riesz basis of exponentials?
\end{enumerate}
\end{question}

In this paper, we give a negative answer to both parts of
Question~\ref{ques:disk-triangle}. More generally, we prove the following theorem.
We write $B(c,R):=\{x\in\R^d:|x-c|<R\}$, and identify sets which differ by a set
of Lebesgue measure zero.

\begin{theorem}\label{thm:main}
Let $d\ge2$, and let $S\subset\R^d$ be one of the following sets:
\begin{enumerate}
    \item[(i)] A Euclidean ball $B(c,R)$, where $R>0$.

    \item[(ii)] A bounded convex polytope with nonempty interior having a facet
    whose supporting hyperplane is parallel to that of no other facet.
    In particular, this includes every nondegenerate $d$-simplex.

    \item[(iii)] A spherical shell $\{x\in\R^d:r<|x-c|<R\}$,
    where $0<r<R$.

    \item[(iv)] A finite union of Euclidean balls,
    \begin{equation*}
        S=\bigcup_{j=1}^{m}B(c_j,R_j),
        \qquad m\ge1,\quad R_j>0.
    \end{equation*}
    The balls are allowed to overlap.

    \item[(v)] A finite union $S=\bigcup_{j=1}^{m}S_j$ of Euclidean balls or
    spherical shells, where
    \begin{equation*}
        S_j=\{x\in\R^d:a_j<|x-c_j|<b_j\},
        \qquad 0\le a_j<b_j,
    \end{equation*}
    and
    \begin{equation*}
        \operatorname{dist}(\overline{S_i},\overline{S_j})>0
        \qquad (i\ne j).
    \end{equation*}
    When $a_j=0$, $S_j$ is understood to be the ball $B(c_j,b_j)$.
\end{enumerate}
Then $L^2(S)$ admits no Riesz basis of exponentials. The same conclusion holds
for every set $LS+v$, where $L$ is an invertible real $d\times d$ matrix and
$v\in\R^d$. It also holds for $S\times K$ and its invertible affine images,
where $k\ge1$ and $K\subset\R^k$ is any Lebesgue measurable set with
$0<|K|<\infty$.
\end{theorem}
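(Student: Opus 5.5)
We first reduce to a single analytic criterion and then check it for each class in the statement. Two reductions are standard. If $E(\Lambda)$ is a Riesz basis for $L^2(S)$, then $E(L^{-T}\Lambda)$ is a Riesz basis for $L^2(LS+v)$, since the change of variables is unitary up to a constant and translation only multiplies by a unimodular function. If $E(\Gamma)$ is a Riesz basis for $L^2(S\times K)$, we restrict to functions $f(x)g(y)$ and project onto the $x$-variable. Then a Kozma--Nitzan--Olevskii type argument gives a Riesz-type frame inequality for $L^2(S)$ with uniformly bounded weights. So it suffices to work with a condition on $S$ alone that is inherited by products. Our criterion is the following energy obstruction, modelled on \cite{KNO23}. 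Suppose there exist functions $f_n\in L^2(S)$ with $\|f_n\|=1$ such that $|\widehat{f_n 1_S}|^2$ concentrates along a direction. Suppose further that, for a suitable translation family, the Riesz inequality forces incompatible lower and upper Beurling-type densities of $\Lambda$ in thin slabs. Then no Riesz basis exists.

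Concretely, we plan to use the decay of $\widehat{1_S}$. If $E(\Lambda)$ is a Riesz basis, then by the Riesz inequality applied to $f=1_S$ and its translates-in-frequency $e^{2\pi i\inpro{\xi}{x}}1_S$, we get
\begin{equation*}
\frac1K\le \sum_{\lambda\in\Lambda}|\widehat{1_S}(\lambda-\xi)|^2\le K
\end{equation*}
uniformly in $\xi\in\R^d$. The key geometric input is the asymptotics of $\widehat{1_S}$ in a distinguished direction $\theta$. For a ball or shell, $|\widehat{1_S}(t\theta)|^2\asymp t^{-d-1}$ up to oscillation in every direction. For a polytope with an unpaired facet with normal $\theta$, $\widehat{1_S}(t\theta)$ has the single term $c\,t^{-1}e^{2\pi i t h}$, with no cancelling partner from a parallel facet. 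Combining this with the lower bound in the display, applied also to modulated functions $e^{2\pi i\inpro{\xi}{x}}\phi(x)1_S$ with $\phi$ vanishing near the boundary except near the distinguished piece, we aim to show two things. First, $\Lambda$ must have points within distance $O(1)$ of almost every line $\xi+\R\theta$. Second, the upper bound combined with the non-cancelling $t^{-1}$ tail gives $\sum|\widehat{1_S}(\lambda-\xi)|^2=\infty$ for suitably chosen $\xi$. This is because the tail $\sum_k k^{-2}$ is summable along one line, but the contributions from the transversal slab accumulate like $\sum_{k}k^{-2}\cdot\#\{\lambda\text{ in a slab of width }\sim 1/k\}$. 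Uniform density makes this count of order $k\cdot k^{d-2}$.

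The main obstacle is to make this divergence argument rigorous. A naive version contradicts the existence of Riesz bases for cubes, so the absence of a parallel facet must enter essentially. We would pair the boundary pieces via the Fourier transform of the surface measure. For the ball this is $\widehat{\sigma}$, with oscillating factor $\cos(2\pi R t-\pi(d-1)/4)$. For the polytope it is a single unpaired exponential. We would then show that the pairing produces a nonvanishing "one-sided" boundary contribution. Averaging over modulations then prevents this contribution from being balanced by any discrete $\Lambda$ satisfying both Riesz bounds. For the unions in (iv) and (v), we localize near an extreme boundary point, namely a point of the outer sphere of the union farthest in direction $\theta$. There the boundary locally looks like a single sphere with curvature, which reduces these cases to the argument for the ball.
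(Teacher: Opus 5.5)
Your affine reduction is fine. The central step, however, is missing, and the route you sketch cannot supply it.

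Every inequality you actually use is of frame type. The bound $\frac1K\le\sum_{\lambda}|\widehat{1_S}(\lambda-\xi)|^2\le K$, and its analogues for $e^{2\pi i\inpro{\xi}{x}}\phi\,1_S$, only test the analysis inequality $\sum_\lambda|\inpro{f}{e_\lambda}|^2\asymp\norm{f}^2$. But every bounded $S$ has exponential frames. If $S$ lies in a cube of side $1/a$, then $E(a\Z^d)$ is a tight frame for $L^2(S)$. For this frame the sum above is a finite constant independent of $\xi$, even though $a\Z^d$ has uniform density and points within $O(1)$ of every line.

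So the claimed divergence of $\sum_\lambda|\widehat{1_S}(\lambda-\xi)|^2$ cannot follow from density, slab counting and the decay of $\widehat{1_S}$. In fact a Riesz basis is separated, and for separated $\Lambda$ this sum is bounded uniformly in $\xi$ by the Bessel inequality. The step that must use the basis property itself is the lower bound $\Norm{\sum_\lambda c_\lambda e_\lambda}{L^2(S)}^2\ge K^{-1}\sum_\lambda|c_\lambda|^2$ on coefficients. In your proposal it appears only as ``pairing boundary pieces'' and ``averaging over modulations'', with no mechanism.

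It is also unclear what the one-sided contribution would be for the ball. It is centrally symmetric, so in every direction $\widehat{1_B}$ has two paired stationary-phase terms (your cosine factor). That is exactly the paired situation you say must be excluded for polytopes.

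Your reductions have the same defect. A Riesz basis cannot be localized near an extreme boundary point, so (iv) and (v) are not reduced to the ball. Projecting $S\times K$ onto $S$ yields only weighted frame inequalities, and these carry no obstruction because frames exist.

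The missing idea, as in the paper, is to use the lower Riesz bound on coefficients through translations. All translates share one bounded synthesis operator $T:\ell^2(\Lambda)\to L^2(U)$. You expand $f_\eps=|Q_\eps|^{-1/2}\1_{Q_\eps}$ on a translate $V_{\eps,t}$ of $S$. The tiny set $Q_\eps$ lies in every $V_{\eps,t}$ and outside every second translate $W_{\eps,t}$, which is shifted a further $2\eps$. Then $Tc_{\eps,t}$ vanishes on $V_{\eps,t}\cap W_{\eps,t}$, so the lower Riesz bound on $W_{\eps,t}$ forces energy at least $A/B$ into $W_{\eps,t}\setminus V_{\eps,t}$ for every $t$.

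The geometry enters only through the translation path.
\begin{itemize}
\item For balls, shells and the unions, the path is circular, and curvature gives each point time $O(\sqrt\eps)$ in the crescent.
\item For polytopes, the path slides along the unpaired facet, and the non-parallel facets make the loss slabs move.
\end{itemize}
Each point also enters and leaves $V_{\eps,t}$ a bounded number of times. Through a Neumann-series iteration, this gives a net for $\{c_{\eps,t}\}$ whose size is independent of $\eps$. Fubini then shows that the averaged loss energy tends to $0$, which is a contradiction. Products are handled by running the same argument on $S\times K$ with windows $V_{\eps,t}\times K$, rather than by descending to $S$.
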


For $d=2$, part~(ii) includes every nondegenerate triangle and every convex
polygon with an odd number of edges, where edges are maximal boundary segments.
Part~(v) includes finite unions of concentric spherical shells, after merging
overlapping or touching radial intervals and discarding the boundary spheres.
The affine extension gives ellipsoids and the regions between concentric
homothetic ellipsoids. For a union, the same affine map is applied to every component.
The product conclusion includes cylinders and prisms over the sets listed above.
The factor $K$ need not be bounded or have nonempty interior.

\begin{remark}\label{rem:more-general-sets}
In fact, our method also yields the nonexistence of Riesz bases of exponentials
for many more general sets.
\end{remark}

Our argument is inspired by Kozma, Nitzan and Olevskii \cite{KNO23}.
Assuming that the area-one disk admits an exponential Riesz basis with constant $K$,
they prove that $K\ge\sqrt{\frac{1+\sqrt{5}}{2}}>1$.
Their proof extends a localized function of $L^2$-norm one by its Riesz expansion.
Since translations preserve coefficient energy, the Riesz bounds force energy
into thin crescents when $K$ is close to $1$. Averaging over the directions
of translation then gives the stated lower bound by Fubini's theorem.

We use the translation principle with an expansion that changes with the
observation window. Our aim is to make the extension vanish on the overlap
of two windows, so that the lower Riesz bound forces energy into their
difference for any finite $K$. We first explain this choice for the disk
$S=D=B(0,r)$, with $r>0$ fixed. Set $\eps_0=r/10$ and fix
$0<\eps<\eps_0$. For $t\in[0,1]$, put
$u_t=(\cos(2\pi t),\sin(2\pi t))$ and define
\begin{equation*}
    \begin{aligned}
        V_{\eps,t}&:=D-(r-\eps)u_t,
        &W_{\eps,t}&:=D-(r+\eps)u_t,\\
        Q_\eps&:=B(0,\eps/4),
        &L_{\eps,t}&:=W_{\eps,t}\setminus V_{\eps,t}.
    \end{aligned}
\end{equation*}
Here $D-a:=\{x-a:x\in D\}$. The parameter $t$ moves both centers once around
the origin, while their separation is $2\eps$. All these sets lie in the fixed
disk $U:=B(0,3r)$. For every $x\in Q_\eps$ and every $t$, the inequalities
$|x+(r-\eps)u_t|<r-3\eps/4$ and $|x+(r+\eps)u_t|>r+3\eps/4$ give
$Q_\eps\subset V_{\eps,t}\setminus W_{\eps,t}$.
Thus the same small ball remains inside every first window and outside every
second window. The reverse difference $L_{\eps,t}$ is the opposite, moving
crescent. Each point belongs to it for a set of parameters of length at most
$C\sqrt{\eps/r}$, where $C$ is independent of $\eps$ and the point.
Throughout the parameter average, $\eps$ and $Q_\eps$ are fixed.
Figure~\ref{fig:disk-windows} shows two positions along this path.

Suppose that this fixed disk $D$ has an exponential Riesz basis with constant $K$.
Translation preserves this basis and its bounds. For fixed $\eps$, choose
one function $f_\eps$, zero outside $Q_\eps$, with $\Norm{f_\eps}{L^2(U)}=1$.
On each $V_{\eps,t}$, we expand this same function and extend its series to $U$.
Denote the extension by $F_{\eps,t}$.
It vanishes almost everywhere on $V_{\eps,t}\cap W_{\eps,t}$.
Thus all the energy required by the lower Riesz bound on $W_{\eps,t}$ lies
in $L_{\eps,t}$.
This gives a positive lower bound for every finite $K$, without an overlap error.

\begin{figure}[H]
    \centering
    \begin{tikzpicture}[scale=3.4, every node/.style={font=\footnotesize}]
        \definecolor{windowblue}{RGB}{35,96,156}
        \definecolor{windoworange}{RGB}{186,87,26}
        \coordinate (cwzero) at (-1.08,0);
        \coordinate (cvzero) at (-0.92,0);
        \coordinate (cwone) at ({-1.08*cos(30)},{-1.08*sin(30)});
        \coordinate (cvone) at ({-0.92*cos(30)},{-0.92*sin(30)});
        \foreach \suffix/\col in {zero/windowblue,one/windoworange} {
            \begin{scope}
                \clip (cw\suffix) circle (1);
                \fill[\col!32,opacity=0.8,even odd rule]
                    (-2.2,-1.7) rectangle (0.3,1.2)
                    (cv\suffix) circle (1);
            \end{scope}
        }
        \foreach \suffix/\col in {zero/windowblue,one/windoworange} {
            \draw[\col,line width=0.9pt,dash pattern=on 3.3pt off 2.1pt]
                (cw\suffix) circle (1);
            \draw[\col,line width=1.05pt] (cv\suffix) circle (1);
        }
        \draw[gray!75!black,line width=0.8pt,densely dotted,->,>=stealth,shorten >=1.5pt]
            (-1.08,0) arc[start angle=180,end angle=210,radius=1.08];
        \draw[gray!75!black,line width=0.8pt,densely dotted,->,>=stealth,shorten >=1.5pt]
            (-0.92,0) arc[start angle=180,end angle=210,radius=0.92];
        \foreach \suffix/\col in {zero/windowblue,one/windoworange} {
            \fill[\col] (cw\suffix) circle (0.014);
            \fill[\col] (cv\suffix) circle (0.014);
        }
        \node[anchor=east,align=right,text=gray!65!black] at (-1.18,-0.27)
            {center paths\\$0\le s\le1/12$};
        \filldraw[fill=green!20,draw=green!40!black,line width=0.7pt]
            (0,0) circle (0.045);
        \node[anchor=north west] at (0.055,-0.055) {$0$};
        \draw[green!35!black,line width=0.45pt] (0.035,0.030) -- (0.31,0.25);
        \node[anchor=west,align=left,green!35!black] at (0.33,0.27)
            {$Q_\eps=B(0,\eps/4)$\\same $f_\eps$ for both times};

        \node[windowblue,anchor=south] at (-1.64,1.01) {$W_{\eps,0}$};
        \node[windowblue,anchor=south] at (-0.50,1.01) {$V_{\eps,0}$};
        \node[windoworange,anchor=north] at (-1.52,-1.49) {$W_{\eps,1/12}$};
        \node[windoworange,anchor=north] at (-0.24,-1.36) {$V_{\eps,1/12}$};
        \node[windowblue,anchor=east,align=right] at (-2.15,0.14)
            {$L_{\eps,0}$\\forced energy};
        \node[windoworange,anchor=east,align=right] at (-1.91,-1.04)
            {$L_{\eps,1/12}$\\forced energy};
        \node[windowblue,align=center,fill=white,inner sep=1.5pt] at (-1.10,0.74)
            {$F_{\eps,0}=0$\\on $V_{\eps,0}\cap W_{\eps,0}$};
        \node[windoworange,align=center,fill=white,inner sep=1.5pt] at (-0.74,-1.22)
            {$F_{\eps,1/12}=0$\\on $V_{\eps,1/12}\cap W_{\eps,1/12}$};

        \node[anchor=west,windowblue] at (0.33,1.02) {$t=0$};
        \node[anchor=west,windoworange] at (0.33,0.88) {$t=1/12$};
        \draw[line width=1.05pt] (0.35,0.73) -- (0.55,0.73);
        \node[anchor=west] at (0.59,0.73) {$V$: first window};
        \draw[line width=0.9pt,dash pattern=on 3.3pt off 2.1pt]
            (0.35,0.60) -- (0.55,0.60);
        \node[anchor=west] at (0.59,0.60) {$W$: second window};
    \end{tikzpicture}
    \makeatletter
    \def\@captionfont{\normalfont\small}
    \makeatother
    \caption{Two configurations for the disk with $\eps/r=0.08$.
    Blue corresponds to $t=0$ and orange to $t=1/12$.
    The first windows $V_{\eps,t}$ have solid boundaries and the second
    windows $W_{\eps,t}$ have dashed boundaries. The dotted arrows follow
    the center paths from blue to orange. Both configurations share
    $Q_\eps=B(0,\eps/4)$ and the same function $f_\eps$.
    The green ball is enlarged for visibility.
    For each $t$, the extension $F_{\eps,t}$ vanishes on
    $V_{\eps,t}\cap W_{\eps,t}$, so its energy on $W_{\eps,t}$ lies in
    $L_{\eps,t}=W_{\eps,t}\setminus V_{\eps,t}$.
    All windows lie in $U=B(0,3r)$.}
    \label{fig:disk-windows}
\end{figure}

The remaining difficulty is that $F_{\eps,t}$ varies with $t$ and could
follow the moving loss region. A small average overlap of the regions alone
does not control this family. Along our chosen path, however, each point
enters or leaves the first window only a bounded number of times.
We use this property to approximate all the extensions, at any fixed
accuracy in $L^2(U)$, by finitely many functions which do not depend on $t$.
Their number is bounded independently of $\eps$.

For each approximating function, Fubini's theorem bounds its average loss
by its total energy times the parameter overlap bound.
This tends to zero. Since the number of approximating functions is uniformly
bounded, the same conclusion holds for the whole family.
The disk $D$, the assumed basis, its constant $K$, and $U$ are kept fixed.
First we let $\eps\downarrow0$ within $(0,r/10)$ at fixed approximation accuracy,
and then let that error tend to zero. The function $f_\eps$ may change with
$\eps$, but never with $t$. The average loss energy must therefore tend to zero.
This contradicts the positive lower bound from every second window:
\begin{equation*}
    K^{-2}\le
    \int_0^1\int_{L_{\eps,t}}|F_{\eps,t}(x)|^2\,\d x\,\d t
    \longrightarrow0\qquad{\rm{as}}\ \eps\to0.
\end{equation*}
For the other sets in Theorem~\ref{thm:main}, we choose different translation
paths with the same properties. The function has a common region of support.
Each point enters or leaves the first window a uniformly bounded number of
times. The loss regions have a uniformly small parameter overlap.

The rest of the paper is organized as follows.
In Section~\ref{sec:main-proof}, we give the abstract argument and prove the
nonexistence criterion.
In Section~\ref{sec:3}, we construct the windows for each class of sets in
Theorem~\ref{thm:main} and complete its proof.
Finally, in Section~\ref{sec:weaker-assumptions}, we replace the uniform
loss bound and the finite variation condition by weighted loss estimates and
mean square approximation. We also apply the resulting criterion to certain
countable unions of balls and give an example for which the chosen windows
have no uniform bound on their variation.

\section{Main proof}\label{sec:main-proof}

We first put the expansions on different translates into a common space.
This lets us compare their coefficients for a fixed function. Throughout this section, inner products are linear in the
first variable.

Suppose that $S\subset\R^d$ is bounded and measurable with nonempty interior, and that
$E(\Lambda)$ is a Riesz basis in $L^2(S)$ with bounds $0<A\le B<\infty$.
Write $e_\lambda(x)=e^{2\pi i\inpro{\lambda}{x}}$.
For $v\in\R^d$, the map
\begin{equation*}
    D_v(c_\lambda)_{\lambda\in\Lambda}
    =\bigl(e^{2\pi i\inpro{\lambda}{v}}c_\lambda\bigr)_{\lambda\in\Lambda}
\end{equation*}
is unitary on $\ell^2(\Lambda)$. The identity
\begin{equation*}
    \sum_{\lambda\in\Lambda}c_\lambda e_\lambda(x+v)
    =\sum_{\lambda\in\Lambda}(D_vc)_\lambda e_\lambda(x)
\end{equation*}
therefore shows that $E(\Lambda)$ is a Riesz basis on every translate $S+v$,
with the same bounds $A,B$.

Let $U$ be a fixed bounded measurable set containing all the translates used
in the argument. Since $S$ contains an open ball, finitely many translates
$S+v_1,\ldots,S+v_m$ cover $U$. For every finitely supported $c$, we have
\begin{equation*}
    \int_U\left|\sum_{\lambda\in\Lambda}c_\lambda e_\lambda(x)\right|^2\d x
    \le\sum_{j=1}^{m}\int_{S+v_j}
       \left|\sum_{\lambda\in\Lambda}c_\lambda e_\lambda(x)\right|^2\d x
    \le mB\norm{c}_{\ell^2(\Lambda)}^2.
\end{equation*}
Thus the finite sums extend to a bounded operator
\begin{equation}\label{eq:main-synthesis}
    T:\ell^2(\Lambda)\longrightarrow L^2(U),
    \qquad C:=\norm{T}^2\le mB.
\end{equation}
The expansion on each window is the restriction of this same operator.
In particular, $C$ stays fixed when the windows move inside $U$.

For a measurable set $E\subset U$, write
\begin{equation*}
    P_EF=\1_EF,\qquad T_Ec=(Tc)|_E,
    \qquad X_E=T_E^*T_E=T^*P_ET.
\end{equation*}
If $E$ is a translate of $S$, then $T_E$ is invertible and
$AI\le X_E\le BI$. Given $f\in L^2(E)$, let $\widetilde f$ denote its zero
extension to $U$. Its analysis vector and its expansion coefficients are
\begin{equation}\label{eq:main-analysis-coefficients}
    a=T_E^*f=T^*\widetilde f,
    \qquad c=T_E^{-1}f=X_E^{-1}a.
\end{equation}
Indeed, if $\delta_\lambda$ is the coordinate vector in $\ell^2(\Lambda)$,
the dual basis is $\psi_\lambda^E=(T_E^{-1})^*\delta_\lambda$
(see \cite{OU16}), and
\begin{equation*}
    f=\sum_{\lambda\in\Lambda}\Inpro{f}{\psi_\lambda^E}{L^2(E)}e_\lambda
     =\sum_{\lambda\in\Lambda}\Inpro{f}{e_\lambda}{L^2(E)}\psi_\lambda^E.
\end{equation*}
When a function is supported in every first window, its vector $T^*f$ is the
same for all these windows. Formula~\eqref{eq:main-analysis-coefficients}
then describes how its coefficients change with the window.

We now state the criterion in a form that only uses a bounded operator and
measurable windows. Let $H$ be a separable complex Hilbert space, let
$U\subset\R^d$ be measurable with finite measure, and let
$T:H\to L^2(U)$ be bounded. We retain the notation $C=\norm{T}^2$,
$P_E$, and $T_E$ introduced above. The parameter interval $I=[0,1]$ is
equipped with a Borel probability measure $\nu$.

\begin{theorem}\label{thm:abstract-windows}
There do not exist constants $\eps_0>0$, $0<A\le B<\infty$ and $0\le J<\infty$,
and measurable sets $V_{\eps,t},W_{\eps,t}\subset U$ for
$0<\eps<\eps_0$ and $t\in I$, satisfying all the following conditions.
For each fixed $\eps$, the indicators of the two windows are jointly measurable
in $(t,x)$.
\begin{enumerate}
    \item[(i)] The operator $T_{V_{\eps,t}}$ is onto, and
    \begin{equation}\label{eq:main-window-bounds}
        \begin{aligned}
            AI\le X_{\eps,t}:=T^*P_{V_{\eps,t}}T\le BI,\\
            AI\le Y_{\eps,t}:=T^*P_{W_{\eps,t}}T\le BI
        \end{aligned}
    \end{equation}
    for every $\eps,t$.

    \item[(ii)] For each $\eps$ there is a measurable set $Q_\eps\subset U$
    of positive measure such that
    \begin{equation*}
        \bigl|Q_\eps\setminus(V_{\eps,t}\setminus W_{\eps,t})\bigr|=0
        \qquad(t\in I).
    \end{equation*}

    \item[(iii)] Writing $L_{\eps,t}=W_{\eps,t}\setminus V_{\eps,t}$, we have
    \begin{equation}\label{eq:main-loss-overlap}
        \omega_\eps:=\operatorname*{ess\,sup}_{x\in U}
        \int_I\1_{L_{\eps,t}}(x)\,\d\nu(t)
        \longrightarrow0\qquad{\rm{as}}\ \eps\to0.
    \end{equation}

    \item[(iv)] For each $\eps$, the indicator representatives satisfy
    \begin{equation}\label{eq:main-turnover}
        \sum_{j=1}^{n}
        \left|\1_{V_{\eps,t_j}}(x)-\1_{V_{\eps,t_{j-1}}}(x)\right|
        \le J
    \end{equation}
    for almost every $x\in U$ and every finite sequence
    $t_0<\cdots<t_n$ in $I$. The exceptional null set is independent of
    the chosen sequence.
\end{enumerate}
\end{theorem}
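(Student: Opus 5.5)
We argue by contradiction: assume all four conditions hold. Fix $\eps$ and choose $f_\eps\in L^2(U)$ vanishing outside $Q_\eps$ with $\norm{f_\eps}{}=1$. Since $T_{V_{\eps,t}}$ is onto and $X_{\eps,t}\ge AI$, it is also injective, hence invertible. Put
\begin{equation*}
a_\eps=T^*f_\eps,\qquad c_{\eps,t}=X_{\eps,t}^{-1}a_\eps,\qquad F_{\eps,t}=Tc_{\eps,t},
\end{equation*}
as in \eqref{eq:main-analysis-coefficients}. By (ii), $f_\eps$ is supported in every $V_{\eps,t}$, so $F_{\eps,t}=f_\eps$ a.e.\ on $V_{\eps,t}$. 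Also $|Q_\eps\cap W_{\eps,t}|=0$, so $F_{\eps,t}=0$ a.e.\ on $V_{\eps,t}\cap W_{\eps,t}$.

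From $1=\inpro{X_{\eps,t}c}{c}$ we get $B^{-1}\le\norm{c_{\eps,t}}{}^2\le A^{-1}$, and hence $\norm{F_{\eps,t}}{L^2(U)}^2\le C/A$. The lower bound in \eqref{eq:main-window-bounds} for $Y_{\eps,t}$ then gives
\begin{equation*}
\int_{L_{\eps,t}}|F_{\eps,t}|^2\,\d x=\inpro{Y_{\eps,t}c_{\eps,t}}{c_{\eps,t}}\ge A/B
\end{equation*}
for every $t$. Integrating in $\nu$ yields the lower bound $A/B$ for the averaged loss. To justify Fubini, I would first show that $t\mapsto X_{\eps,t}$ is weakly measurable, using joint measurability of the indicators. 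Then $t\mapsto c_{\eps,t}$ is measurable, and $F_{\eps,t}(x)$ has a jointly measurable version.

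The core step is a uniform finite approximation. For every $\delta>0$ there should be $N=N(\delta,A,C,J)$, independent of $\eps$, and parameters $s_1,\dots,s_N$ such that every $t$ satisfies $\min_k\norm{F_{\eps,t}-F_{\eps,s_k}}{}\le\delta$. I would get the $s_k$ greedily: scan $I$ from left to right and start a new representative whenever the distance to the current one exceeds $\delta$. It then suffices to bound the length of any chain $t_0<\dots<t_n$ with consecutive $F$'s $\delta$-apart. The resolvent identity gives
\begin{equation*}
c_{\eps,t}-c_{\eps,s}=X_{\eps,t}^{-1}T^*(P_{V_{\eps,s}}-P_{V_{\eps,t}})F_{\eps,s},
\end{equation*}
so
\begin{equation*}
\norm{F_{\eps,t}-F_{\eps,s}}{}^2\le \frac{C^2}{A^2}\int_{V_{\eps,t}\triangle V_{\eps,s}}|F_{\eps,s}|^2 .
\end{equation*}
By (iv), the sets $\Delta_j=V_{\eps,t_j}\triangle V_{\eps,t_{j-1}}$ satisfy $\sum_j\1_{\Delta_j}\le J$ a.e., hence $\sum_jT^*P_{\Delta_j}T\le JC\cdot I$. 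If the functions $F_{\eps,t_{j-1}}$ were a single fixed function of norm at most $\sqrt{C/A}$, this would give $n\delta^2\le C^3J/A^3$ immediately.

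This is the main obstacle: $F_{\eps,s}$ changes along the chain. My first attempt would be a refined selection that exploits the a priori energy bound. Cover the unit ball of $\ell^2(\Lambda)$ restricted to the relevant coefficients by comparing each $c_{\eps,t_{j-1}}$ with a fixed reference $c_{\eps,t_0}$, and telescope. Whenever the varying function drifts far from the reference, restart the reference; each restart consumes a definite amount of the total budget $JC$. The goal is a bound on $n$ of the form $n\le C'J\,\delta^{-2}(C/A)^{O(1)}$ after a recursive-depth argument. If this fails, I would instead pass to a subsequence in $\eps$ and use weak compactness of the coefficient vectors in $\ell^2(\Lambda)$, combined with the operator-sum bound, to get the approximation for small $\eps$.

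Once the approximation is available, choose a measurable index $k(t)$ with $\norm{F_{\eps,t}-G_{k(t)}}{}\le\delta$, where $G_k=F_{\eps,s_k}$. Fubini and (iii) then give
\begin{equation*}
\frac AB\le 2\delta^2+2\sum_{k=1}^N\int_U|G_k(x)|^2\int_I\1_{L_{\eps,t}}(x)\,\d\nu(t)\,\d x\le 2\delta^2+2N\frac CA\,\omega_\eps .
\end{equation*}
First let $\eps\to0$ with $\delta$ fixed, then let $\delta\to0$. This gives $A/B\le0$, which is the desired contradiction.
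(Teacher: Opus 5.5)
\textbf{There is a genuine gap.} Your setup, the forced bound $\int_{L_{\eps,t}}|F_{\eps,t}|^2\ge A/B$, and the final Fubini step (first $\eps\to0$ at fixed $\delta$, then $\delta\to0$) agree with the paper. Your computation for a single fixed function is exactly Lemma~\ref{lem:main-direct-cover}. The gap is the step you yourself call the core: you never establish a $\delta$-net of $\{c_{\eps,t}:t\in I\}$ whose cardinality is independent of $\eps$, and this is the real content of the theorem.

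In your chain estimate, $f_\eps$ vanishes a.e.\ on $V_{\eps,t_{j-1}}\setminus V_{\eps,t_j}$, because $Q_\eps\subset V_{\eps,t_j}$ a.e. So $\int_{\Delta_j}|F_{\eps,t_{j-1}}|^2$ consists entirely of the uncontrolled extension of $F_{\eps,t_{j-1}}$ on $V_{\eps,t_j}\setminus V_{\eps,t_{j-1}}$. The bound $\sum_jT^*P_{\Delta_j}T\le JC\,I$ controls $\sum_j\|P_{\Delta_j}Tw\|^2$ only for a \emph{fixed} $w$. For varying $w_j$ it gives nothing: mutually orthogonal projections $D_j$ satisfy $\sum_jD_j^*D_j\le I$, yet $\sum_j\|D_jw_j\|^2=n$ for unit vectors $w_j$ in their ranges.

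The ``restart'' scheme gives no mechanism forcing each restart to consume a fixed share of the budget. The targeted polynomial bound on $n$ is neither proved nor needed. The weak-compactness fallback also cannot work. Weak limits give no norm approximation, and you need, for each $\eps$ separately, a norm net of the whole family $t\mapsto c_{\eps,t}$ of size uniform in $\eps$.

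\textbf{The missing idea.} Never estimate the variation of $c_{\eps,t}$ itself, only of orbits of \emph{fixed} vectors, and reach $X_{\eps,t}^{-1}$ through a contraction.
\begin{itemize}
\item Since $a_\eps$ does not depend on $t$, approximate $c_{\eps,t}$ by $y_{0,t}=0$, $y_{n+1,t}=a_\eps/B+R_ty_{n,t}$, where $R_t=I-X_{\eps,t}/B$ satisfies $0\le R_t\le qI$ with $q=1-A/B<1$. With $\rho=\sqrt B/A$ one has $\|y_{n,t}\|\le\rho$ and $\|c_{\eps,t}-y_{n,t}\|\le\rho q^n$.
\item For each fixed $z$ with $\|z\|\le\rho$, condition~(iv) bounds the square variation of $t\mapsto R_tz$ by $JC^2\rho^2/B^2$. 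Packing then gives an $h$-net $\mathcal F_z$ of $\{R_tz:t\in I\}$ with at most $L$ centres, where $L$ is independent of $\eps$.
\item Put $\mathcal E_0=\{0\}$ and $\mathcal E_{n+1}=\bigcup_{z\in\mathcal E_n}(a_\eps/B+\mathcal F_z)$. At each step you first replace $y_{n,t}$ by a fixed centre $z\in\mathcal E_n$, and only then apply $R_t$. Hence the errors satisfy $e_{n+1}\le qe_n+h$.
\item Take $h=(1-q)\delta/2$ and choose $m$ with $\rho q^m\le\delta/2$. Then $\mathcal E_m$ is a $\delta$-net of $\{c_{\eps,t}:t\in I\}$ with at most $L^m$ centres, each of norm at most $\rho$. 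This is Lemma~\ref{lem:main-inverse-cover}.
\end{itemize}
The count $L^m$ is much worse than the polynomial bound you hoped for. It is independent of $\eps$, however, and that is all your final estimate uses.
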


The first two conditions allow us to reconstruct a common function with
$L^2(U)$-norm one and force positive energy into each $L_{\eps,t}$.
We use the notation for the disk in the Introduction to explain the idea.
Fix $\eps$ and choose a function $f_\eps$ supported in $Q_\eps$,
with $\Norm{f_\eps}{L^2(U)}=1$. Let $c_{\eps,t}$ be its expansion coefficients
on $V_{\eps,t}$ and write $F_{\eps,t}=Tc_{\eps,t}$.
Since $F_{\eps,t}=f_\eps$ on $V_{\eps,t}$ and $Q_\eps$ is disjoint from
$W_{\eps,t}$, the extension vanishes almost everywhere on
$V_{\eps,t}\cap W_{\eps,t}$.
The upper Riesz bound on $V_{\eps,t}$ gives $\norm{c_{\eps,t}}^2\ge1/B$,
so the lower bound on $W_{\eps,t}$ yields
\begin{equation*}
    \int_{L_{\eps,t}}|F_{\eps,t}(x)|^2\,\d x
    =\int_{W_{\eps,t}}|F_{\eps,t}(x)|^2\,\d x
    \ge A\norm{c_{\eps,t}}^2\ge\frac AB.
\end{equation*}

To understand condition~(iii), we consider the following toy model.
Let $G\in L^2(U)$ be fixed as $t$ varies. By Fubini's theorem,
\begin{equation*}
    \begin{aligned}
        \int_I\int_{L_{\eps,t}}|G(x)|^2\,\d x\,\d\nu(t)
        &=\int_U|G(x)|^2
          \left(\int_I\1_{L_{\eps,t}}(x)\,\d\nu(t)\right)\d x\\
        &\le\omega_\eps\Norm{G}{L^2(U)}^2.
    \end{aligned}
\end{equation*}
The last inequality follows directly from the definition of $\omega_\eps$.
By condition~(iii), the average energy tends to zero uniformly over functions
with bounded $L^2(U)$-norm. The functions $F_{\eps,t}$ vary with $t$, so their energy could
move with $L_{\eps,t}$.

A simple example shows the problem. On the circle $\R/\Z$, take $0<\eps<1$
and let $J_{\eps,t}=[t,t+\eps)$ modulo $1$, with $t\in[0,1]$.
Set $G_{\eps,t}=\eps^{-1/2}\1_{J_{\eps,t}}$.
Each point belongs to $J_{\eps,t}$ for a set of times of Lebesgue measure $\eps$, but
\begin{equation*}
    \int_{J_{\eps,t}}|G_{\eps,t}(x)|^2\,\d x
    =\Norm{G_{\eps,t}}{L^2(\R/\Z)}^2=1.
\end{equation*}
The function moves with the interval and keeps all its energy there, so the
average energy stays equal to one as $\eps\to0$.
Condition~(iv) and the Riesz bounds let us approximate $F_{\eps,t}$ in $L^2(U)$
by finitely many functions fixed in $t$. At each fixed accuracy, these functions
may depend on $\eps$, but their number has a bound independent of $\eps$.

For the disk, condition~(iv) follows directly from the motion of the first window.
Recall that $V_{\eps,t}=B(-(r-\eps)u_t,r)$.
For fixed $x\ne0$, the condition $x\in V_{\eps,t}$ is equivalent to
\begin{equation*}
    x\cdot u_t<\frac{r^2-|x|^2-(r-\eps)^2}{2(r-\eps)}.
\end{equation*}
The left-hand side is a cosine function of $2\pi t$.
During one full turn, the allowed directions form an arc, possibly empty or
the whole circle. Thus the indicator changes value at most twice.
At $x=0$, it is constant. Hence condition~(iv) holds with $J=2$,
independently of $\eps$.

If the disk admitted an exponential Riesz basis, the windows above would
satisfy all four conditions of Theorem~\ref{thm:abstract-windows}.
We will give all the details in Subsection~\ref{subsec:balls-shells}.
Taking the theorem as given for now,
we therefore obtain a contradiction and rule out such a basis.
We next prove the lemmas needed for the abstract argument, which also applies
to more general sets. In each case, the aim is to force a fixed positive lower bound
on the average loss energy and then show that this energy tends to zero.

We call a finite set of centres in $H$ a \emph{finite $\eta$-net} of a given set
if every point of that set is at distance at most $\eta$ from one of the centres.
A subset of a Hilbert space is precompact if and only if it admits
a finite $\eta$-net for every $\eta>0$ (see \cite{Lax02}).

\begin{lemma}\label{lem:main-direct-cover}
Suppose that the windows satisfy \eqref{eq:main-turnover}. For every $v\in H$
and $\eta>0$, the set $\{X_{\eps,t}v:t\in I\}$ has an $\eta$-net whose
centres belong to this set and whose cardinality is at most
\begin{equation}\label{eq:main-direct-number}
    \left\lceil1+\frac{JC^2\norm{v}^2}{\eta^2}\right\rceil.
\end{equation}
The bound is independent of $\eps$.
\end{lemma}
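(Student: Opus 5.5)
Write $g:=Tv\in L^2(U)$, so that $X_{\eps,t}v=T^*(\1_{V_{\eps,t}}g)$ for every $t$. The plan is to show that, along any increasing sequence of parameters, the increments of $t\mapsto X_{\eps,t}v$ have bounded total square. Then any $\eta$-separated subset of $\{X_{\eps,t}v:t\in I\}$ has controlled cardinality, and a maximal such subset is the required net.

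\emph{Step 1: increments.} For $s,t\in I$, the bound $\norm{T^*}^2=C$ gives
\begin{equation*}
    \norm{X_{\eps,t}v-X_{\eps,s}v}^2
    =\Norm{T^*\bigl((\1_{V_{\eps,t}}-\1_{V_{\eps,s}})g\bigr)}{H}^2
    \le C\int_U\bigl|\1_{V_{\eps,t}}(x)-\1_{V_{\eps,s}}(x)\bigr|\,|g(x)|^2\,\d x .
\end{equation*}
Here we use that the difference of indicators takes values in $\{-1,0,1\}$, so its square equals its absolute value.

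\emph{Step 2: summing along a chain.} Let $t_0<\cdots<t_n$ be any finite sequence in $I$. Sum the bound from Step 1 over consecutive pairs and integrate \eqref{eq:main-turnover} against $|g|^2$. This is legitimate because the exceptional null set in \eqref{eq:main-turnover} does not depend on the sequence. We obtain
\begin{equation*}
    \sum_{j=1}^n\norm{X_{\eps,t_j}v-X_{\eps,t_{j-1}}v}^2
    \le CJ\norm{g}^2_{L^2(U)}\le C^2J\norm{v}^2,
\end{equation*}
since $\norm{g}^2\le C\norm{v}^2$. This bound involves only $J$, $C$ and $\norm{v}$, and not $\eps$.

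\emph{Step 3: separated sets.} Let $\{t_0,\ldots,t_n\}$ be parameters whose images $X_{\eps,t_j}v$ are pairwise at distance $>\eta$. List them in increasing order. Each consecutive increment then exceeds $\eta$, so by Step 2 we get $n\eta^2<C^2J\norm{v}^2$. Hence every $\eta$-separated subset of $\{X_{\eps,t}v:t\in I\}$ has at most $1+JC^2\norm{v}^2/\eta^2$ elements, and in particular at most the number \eqref{eq:main-direct-number}.

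\emph{Step 4: maximal separated set.} Since these cardinalities are uniformly bounded, we may choose an $\eta$-separated subset of maximal cardinality. By maximality, every point $X_{\eps,t}v$ lies within distance $\eta$ of one of its elements; otherwise we could add that point and keep the set separated. So this subset is an $\eta$-net with centres in the set, of cardinality at most \eqref{eq:main-direct-number}, independently of $\eps$.

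The only delicate point is Step 2. We must use the almost-everywhere bound \eqref{eq:main-turnover} with a null set independent of the chosen sequence. We must also avoid a ``first exit time'' greedy construction, whose infimum may fail to be attained. The maximal-separated-set argument in Step 4 sidesteps that issue entirely.
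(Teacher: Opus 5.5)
Your proof is correct and follows the paper's argument. It uses the same square-variation bound along increasing chains and the same packing count for $\eta$-separated orbit points, and it finishes with a maximal separated set. The paper reaches that set by greedy addition of any point at distance greater than $\eta$ from those already chosen, which is equivalent to your maximal-cardinality choice and not a first-exit-time construction.

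Two minor remarks. For $n=0$ your inequality $n\eta^2<C^2J\norm{v}^2$ should be non-strict, since it fails when $v=0$. Also, for a single fixed chain the a.e. bound \eqref{eq:main-turnover} would suffice even if its null set depended on the chain, so that independence is not really needed here.
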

\begin{proof}
Fix $\eps$ and omit it from the notation. For any $t_0<\cdots<t_n$, we have
\begin{equation}\label{eq:main-square-variation}
    \begin{aligned}
    \sum_{j=1}^{n}\norm{(X_{t_j}-X_{t_{j-1}})v}^2
    &\le C\sum_{j=1}^{n}
       \Norm{(P_{V_{t_j}}-P_{V_{t_{j-1}}})Tv}{L^2(U)}^2\\
    &=C\int_U|Tv(x)|^2\sum_{j=1}^{n}
       \left|\1_{V_{t_j}}(x)-\1_{V_{t_{j-1}}}(x)\right|^2\d x\\
    &\le JC\Norm{Tv}{L^2(U)}^2
     \le JC^2\norm{v}^2.
    \end{aligned}
\end{equation}
Here the square of each indicator difference equals its absolute value.

If $m$ orbit points have pairwise distances greater than $\eta$, order their
parameters increasingly. Consecutive points still have distance greater than
$\eta$, so \eqref{eq:main-square-variation} gives
$(m-1)\eta^2\le JC^2\norm{v}^2$.
Starting with any orbit point, keep adding a point at distance greater than
$\eta$ from all previously chosen points whenever one exists.
The procedure terminates after finitely many steps, with the number of chosen
points bounded by \eqref{eq:main-direct-number}. The resulting set is the required net.
\end{proof}

Lemma~\ref{lem:main-direct-cover} shows that the variation bound on the windows
gives a finite approximation to the family $\{X_{\eps,t}v:t\in I\}$ for each
fixed $v$. At any fixed accuracy, the number of centres is bounded independently
of $\eps$ and uniformly for vectors $v$ of bounded norm.

The coefficients in \eqref{eq:main-analysis-coefficients} involve $X_{\eps,t}^{-1}$.
We therefore need the same kind of estimate for the inverse operators.
The lower bound $A$ gives an iteration that reduces the error by a fixed
factor less than one. We apply the preceding lemma at each step.

\begin{lemma}\label{lem:main-inverse-cover}
Suppose that $AI\le X_{\eps,t}\le BI$ and that
\eqref{eq:main-square-variation} holds. For every $M>0$ and $\delta>0$,
there is a finite number $N_\delta$, depending only on $A,B,C,J,M,\delta$,
with the following property. For every $a\in H$ with $\norm{a}\le M$ and
every $\eps$, the set
\begin{equation*}
    \{X_{\eps,t}^{-1}a:t\in I\}
\end{equation*}
has a $\delta$-net of at most $N_\delta$ centres, each of norm at most $M/A$.
\end{lemma}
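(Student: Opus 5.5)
Fix $\eps$ and drop it from the notation; all constants below are independent of $\eps$. The plan is to write $X_t^{-1}a$ as a Neumann series in the operator $R_t:=I-X_t/B$, truncate it after a number of terms depending only on $\delta,A,B$, and cover each truncated term by iterating Lemma~\ref{lem:main-direct-cover}. Since $AI\le X_t\le BI$, the operator $R_t$ is self-adjoint with $0\le R_t\le(1-A/B)I$. Put $q:=1-A/B<1$. Then
\begin{equation*}
X_t^{-1}a=\frac1B\sum_{k\ge0}R_t^ka,\qquad
\Bigl\|X_t^{-1}a-\frac1B\sum_{k=0}^{n-1}R_t^ka\Bigr\|\le\frac{q^nM}{A}.
\end{equation*}
Choose $n$ with $q^nM/A\le\delta/3$; this $n$ depends only on $A,B,M,\delta$.

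For the truncated sums, I would use the recursion $y_{k+1}(t)=a+R_ty_k(t)$ with $y_0=0$, so that $y_n(t)=\sum_{k<n}R_t^ka$ and $\norm{y_k(t)}\le M/(1-q)=MB/A$. The inductive claim is that $\{y_k(t):t\in I\}$ has an $\eta_k$-net of cardinality $N_k$, with $N_k$ and $\eta_k$ depending only on the constants. Suppose $\{z_1,\dots,z_{N_k}\}$ is an $\eta_k$-net for step $k$, with centres taken from the orbit, so that $\norm{z_i}\le MB/A$. For each $t$ pick $i$ with $\norm{y_k(t)-z_i}\le\eta_k$. Then
\begin{equation*}
\norm{y_{k+1}(t)-(a+R_tz_i)}\le q\eta_k .
\end{equation*}
Next I apply Lemma~\ref{lem:main-direct-cover} to each fixed vector $z_i$. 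This gives an $\eta'$-net of $\{X_tz_i:t\in I\}$ of size $\lceil1+JC^2(MB/A)^2/\eta'^2\rceil$, and hence an $(\eta'/B)$-net of $\{a+R_tz_i:t\in I\}=\{a+z_i-X_tz_i/B:t\in I\}$. Taking the union over $i$ yields a net for step $k+1$ with radius $\eta_{k+1}=q\eta_k+\eta'/B$ and cardinality $N_{k+1}=N_k\lceil1+JC^2(MB)^2/(A\eta')^2\rceil$.

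Starting from $\eta_0=0$, $N_0=1$, and choosing $\eta'$ small (for instance $\eta'=B\delta A/(3n)$), we get $\eta_n\le n\eta'/B\le\delta B/3$. The final net for $X_t^{-1}a$ then has radius at most $\delta/3+\eta_n/B\le 2\delta/3$, with all constants independent of $\eps$.

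The main obstacle is the requirement that centres have norm at most $M/A$ and ideally lie in the set itself. The cleanest fix is the standard replacement step. For each centre $w$ whose $2\delta/3$-ball meets $\{X_t^{-1}a\}$, pick a point of the orbit in that ball; this produces a $\delta$-net-like object of radius $4\delta/3$. I would therefore run the construction with $\delta$ replaced by $3\delta/4$ from the start to get radius exactly $\delta$. The chosen centres are then orbit points, and each has norm at most $\norm{a}/A\le M/A$ because $X_t\ge AI$. One technical point to check is that Lemma~\ref{lem:main-direct-cover} only needs \eqref{eq:main-square-variation}, which is exactly what is assumed here; its proof uses only that inequality. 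Finally, $N_\delta:=N_n$ depends only on $A,B,C,J,M,\delta$.
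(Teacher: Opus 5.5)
Your proposal is correct and is essentially the paper's argument: the same iteration with $R_t=I-X_t/B$ (up to a scaling by $B$), nested nets built by applying the packing bound of Lemma~\ref{lem:main-direct-cover} to each fixed centre, the error recursion $\eta_{k+1}=q\eta_k+h$, and truncation after a number of steps depending only on the constants. One justification needs fixing: your intermediate centres $a+R_sz_i$ are in general not orbit points, so ``taken from the orbit'' is not maintained. The norm bound still holds, however, since $\norm{a+R_sz_i}\le M+q\,MB/A=MB/A$, and this is all the cardinality estimate needs. The same computation shows that the final centres $w/B$ already have norm at most $M/A$, so your closing replacement step is unnecessary, though valid.
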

\begin{proof}
Fix $\eps,a$ and omit $\eps$ from the notation. Set
\begin{equation*}
    q=1-\frac AB,\qquad R_t=I-\frac{X_t}{B},
    \qquad \rho=\frac MA.
\end{equation*}
Then $0\le R_t\le qI$ and $0\le q<1$. Put $c_t=X_t^{-1}a$ and define
\begin{equation}\label{eq:main-iteration}
    y_{0,t}=0,\qquad y_{n+1,t}=\frac aB+R_ty_{n,t}.
\end{equation}
Since $c_t=a/B+R_tc_t$ and $M/B+q\rho=\rho$, induction gives
\begin{equation}\label{eq:main-iteration-error}
    \norm{y_{n,t}}\le\rho,
    \qquad\norm{c_t-y_{n,t}}\le\rho q^n.
\end{equation}
We next construct finite approximations to these iterates with an error bound
independent of $n$ and $\eps$.

Choose
\begin{equation}\label{eq:main-net-precision}
    h=\frac{(1-q)\delta}{2}=\frac{A\delta}{2B},
    \qquad
    L_\delta=\left\lceil1+\frac{4JC^2M^2}{A^4\delta^2}\right\rceil.
\end{equation}
For each fixed $z$ with $\norm{z}\le\rho$, the identity
$(R_t-R_s)z=-(X_t-X_s)z/B$ and the packing argument in the proof of
Lemma~\ref{lem:main-direct-cover} show that
$\{R_tz:t\in I\}$ has an $h$-net $\mathcal F_z$ with centres in that orbit and
\begin{equation*}
    \#\mathcal F_z
    \le\left\lceil1+\frac{JC^2\rho^2}{B^2h^2}\right\rceil
    =L_\delta.
\end{equation*}
Starting from $\mathcal E_0=\{0\}$, define
\begin{equation*}
    \mathcal E_{n+1}
    =\bigcup_{z\in\mathcal E_n}\left(\frac aB+\mathcal F_z\right).
\end{equation*}
Each new centre has norm at most $M/B+q\rho=\rho$. Thus,
\begin{equation}\label{eq:main-net-recursion}
    \#\mathcal E_n\le L_\delta^n,
    \qquad\norm{z}\le\rho\quad(z\in\mathcal E_n).
\end{equation}
Set $e_0=0$ and $e_{n+1}=qe_n+h$. We prove by induction that $\mathcal E_n$
covers all $y_{n,t}$ with error $e_n$. Suppose this holds at step $n$.
Choose $z\in\mathcal E_n$ within $e_n$ of $y_{n,t}$, and then choose a point
of $\mathcal F_z$ within $h$ of $R_tz$. Adding $a/B$ to the latter point gives
a point of $\mathcal E_{n+1}$ within $qe_n+h=e_{n+1}$ of $y_{n+1,t}$.
The induction starts at $\mathcal E_0=\{0\}$, and
\begin{equation*}
    e_n=h\sum_{j=0}^{n-1}q^j\le\frac\delta2.
\end{equation*}
Choose $m\ge1$ so that $\rho q^m\le\delta/2$, taking $m=1$ when $q=0$.
Equations~\eqref{eq:main-iteration-error} and \eqref{eq:main-net-recursion}
show that $\mathcal E_m$ is a $\delta$-net for all $c_t$ with
$\#\mathcal E_m\le N_\delta:=L_\delta^m$ and centre norms at most $\rho$.
The choices of $m$ and $L_\delta$ depend only on the stated constants.
\end{proof}

The centres may depend on $a$ and $\eps$, but at each fixed accuracy their
number and norms have uniform bounds. We can therefore apply the
small-overlap estimate to these finitely many vectors, which are fixed as
$t$ varies.

\begin{proof}[Proof of Theorem~\ref{thm:abstract-windows}]
Suppose that all four conditions hold. For each $\eps$, choose
\begin{equation*}
    f_\eps=|Q_\eps|^{-1/2}\1_{Q_\eps}\in L^2(U),
    \qquad a_\eps=T^*f_\eps.
\end{equation*}
The function $f_\eps$ has norm one and is fixed as $t$ varies.
By condition~(ii),
\begin{equation*}
    a_\eps=T_{V_{\eps,t}}^*(f_\eps|_{V_{\eps,t}}),
    \qquad\norm{a_\eps}\le\sqrt B.
\end{equation*}
Condition~(i) makes $T_{V_{\eps,t}}$ invertible. Hence
\begin{equation}\label{eq:main-exact-reconstruction}
    c_{\eps,t}:=X_{\eps,t}^{-1}a_\eps,
    \qquad T_{V_{\eps,t}}c_{\eps,t}=f_\eps|_{V_{\eps,t}}.
\end{equation}
Applying the two bounds on this window gives
\begin{equation}\label{eq:main-coefficient-size}
    \frac1B\le\norm{c_{\eps,t}}^2\le\frac1A.
\end{equation}

The function $Tc_{\eps,t}$ vanishes almost everywhere on
$V_{\eps,t}\cap W_{\eps,t}$. Thus all its energy on the second window lies in
$L_{\eps,t}$. More precisely, set
\begin{equation*}
    Z_{\eps,t}=T^*P_{L_{\eps,t}}T.
\end{equation*}
Then $0\le Z_{\eps,t}\le Y_{\eps,t}\le BI$, and
\begin{equation}\label{eq:main-forced-energy}
    \inpro{Z_{\eps,t}c_{\eps,t}}{c_{\eps,t}}
    =\Norm{T_{W_{\eps,t}}c_{\eps,t}}{L^2(W_{\eps,t})}^2
    \ge A\norm{c_{\eps,t}}^2\ge\frac AB.
\end{equation}

On the other hand, for every fixed $v\in H$, Tonelli's theorem gives
\begin{equation}\label{eq:main-fixed-energy}
    \begin{aligned}
    \int_I\inpro{Z_{\eps,t}v}{v}\,\d\nu(t)
    &=\int_U|Tv(x)|^2
      \left(\int_I\1_{L_{\eps,t}}(x)\,\d\nu(t)\right)\d x\\
    &\le\omega_\eps\Norm{Tv}{L^2(U)}^2
     \le C\omega_\eps\norm{v}^2.
    \end{aligned}
\end{equation}
To compare this with \eqref{eq:main-forced-energy}, fix $\delta>0$ and apply
Lemma~\ref{lem:main-inverse-cover} with $M=\sqrt B$. For each $\eps$, it gives
a finite set
\begin{equation*}
    \{v_{\eps,1},\ldots,v_{\eps,N_\eps}\},
    \qquad N_\eps\le N_\delta,
    \qquad\norm{v_{\eps,j}}\le\frac{\sqrt B}{A},
\end{equation*}
within distance $\delta$ of every $c_{\eps,t}$. The number $N_\delta$ is
independent of $\eps$.

For each $t$, choose a centre $v_{\eps,j}$ with
$\norm{c_{\eps,t}-v_{\eps,j}}\le\delta$.
Using $\norm{Z_{\eps,t}^{1/2}}\le\sqrt B$, we obtain
\begin{equation*}
    \frac AB
    \le\norm{Z_{\eps,t}^{1/2}c_{\eps,t}}^2
    \le2\norm{Z_{\eps,t}^{1/2}v_{\eps,j}}^2+2B\delta^2
    \le2\sum_{j=1}^{N_\eps}
       \inpro{Z_{\eps,t}v_{\eps,j}}{v_{\eps,j}}+2B\delta^2.
\end{equation*}
Each centre is fixed as $t$ varies, so the last expression is measurable.
Integrating and using \eqref{eq:main-fixed-energy} gives
\begin{equation}\label{eq:main-final-estimate}
    \frac AB
    \le\frac{2CB}{A^2}N_\delta\omega_\eps+2B\delta^2.
\end{equation}
Choose $\delta^2=A/(8B^2)$, so the second term is $A/(4B)$.
With this choice fixed, condition~(iii) allows us to take $\eps$ small enough
that the first term is less than $A/(4B)$.
This contradicts \eqref{eq:main-final-estimate}.
\end{proof}

For completeness, the joint measurability assumed above also makes
$t\mapsto X_{\eps,t}v$ strongly measurable for each fixed $v$.
The uniform expansion
\begin{equation*}
    X_{\eps,t}^{-1}=\frac1B\sum_{n=0}^{\infty}
    \left(I-\frac{X_{\eps,t}}B\right)^n
\end{equation*}
then gives strong measurability of $t\mapsto c_{\eps,t}$.
Thus the average energy used in the Introduction is well defined.

To apply the criterion to a bounded set $S$ with nonempty interior, assume
that $E(\Lambda)$ is a Riesz basis on $S$. If the first and second windows
are translates of $S$ contained in one fixed bounded set $U$, the construction
at the start of this section supplies $T$ and condition~(i).
It remains to choose the windows so that conditions~(ii)--(iv) hold.
Theorem~\ref{thm:abstract-windows} then gives the required contradiction.

\section{Geometric constructions}\label{sec:3}

For parts~(i)--(v) of Theorem~\ref{thm:main}, we use translates of $S$
contained in a fixed bounded set $U$.
Section~\ref{sec:main-proof} then gives condition~(i) of
Theorem~\ref{thm:abstract-windows}.
We verify the other conditions by finding a common set $Q_\eps$ of positive
measure, proving that $\omega_\eps\to0$, and bounding the variation of the
first-window indicators.

We use normalized Lebesgue measure on each parameter interval. An affine
change of parameter identifies it with $[0,1]$ and preserves the estimates
required by the abstract criterion. In parts~(i)--(v), the windows are continuous translates
of bounded Borel sets, so their indicators are jointly measurable.
A null-set modification does not change the Riesz basis property.
We represent each shell by the difference of its open outer and inner balls.

\subsection{A bound for moving balls}

Several cases use the same circular motion. Fix orthonormal vectors
$e_1,e_2\in\R^d$, put $E=\operatorname{span}\{e_1,e_2\}$, and write
\begin{equation*}
    u_\theta=e_1\cos\theta+e_2\sin\theta.
\end{equation*}
We first record an elementary estimate. For an interval $J_0\subset\R$
of length $h>0$,
\begin{equation}\label{eq:geom-cosine}
    \bigl|\{\theta\in[0,2\pi]:\cos\theta\in J_0\}\bigr|
    =2\int_{J_0\cap[-1,1]}\frac{\d s}{\sqrt{1-s^2}}
    \le4\sqrt{2h}.
\end{equation}
To see the last inequality, split the integral at zero. On $[0,1]$,
$(1-s^2)^{-1/2}\le(1-s)^{-1/2}$, whose integral over an interval of length
$h_+$ is at most $2\sqrt{h_+}$. The negative half has the same bound.
Adding the two contributions and using $h_++h_-\le h$ gives
\eqref{eq:geom-cosine}.

\begin{lemma}\label{lem:geom-moving-balls}
Let $R,a>0$ and $b\in\R^d$. Define
\begin{equation}\label{eq:geom-component-windows}
    V_{\eps,\theta}^{a}=B\bigl(b-(R-\eps)u_\theta,a\bigr),
    \qquad
    W_{\eps,\theta}^{a}=B\bigl(b-(R+\eps)u_\theta,a\bigr).
\end{equation}
There is a constant $C_{R,a}$, independent of $b,x,\eps$, such that
\begin{equation}\label{eq:geom-outer-loss}
    \sup_{x\in\R^d}
    \bigl|\{\theta\in[0,2\pi]:
        x\in W_{\eps,\theta}^{a}\setminus V_{\eps,\theta}^{a}\}\bigr|
    \le C_{R,a}\sqrt{\eps}
\end{equation}
whenever $0<\eps<R/2$. If $0<a<R$, then also
\begin{equation}\label{eq:geom-inner-loss}
    \sup_{x\in\R^d}
    \bigl|\{\theta\in[0,2\pi]:
        x\in V_{\eps,\theta}^{a}\setminus W_{\eps,\theta}^{a}\}\bigr|
    \le C_{R,a}\sqrt{\eps}
\end{equation}
for $0<\eps<\min\{R/2,(R-a)/2\}$. Each of the two ball indicators has total
variation at most $2$ on any interval of length at most $2\pi$.
\end{lemma}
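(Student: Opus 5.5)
Write $x=b+y$ and decompose $y=y_E+y_\perp$ with $y_E\in E$. Membership $x\in B(b-su_\theta,a)$ for $s=R\mp\eps$ is equivalent to $|y+su_\theta|^2<a^2$, that is,
\begin{equation*}
    2s\,\inpro{y}{u_\theta}<a^2-|y|^2-s^2 .
\end{equation*}
Put $\rho=|y_E|$. If $\rho>0$, then $\inpro{y}{u_\theta}=\rho\cos(\theta-\phi)$ for some phase $\phi$. Each indicator therefore has the form $\1\{\cos(\theta-\phi)<\gamma_s\}$ with $\gamma_s=(a^2-|y|^2-s^2)/(2s\rho)$. For fixed $x$ the admitted $\theta$ form a single arc of the circle, which may be empty or the whole circle. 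An indicator of one arc changes value at most twice on any interval of length at most $2\pi$. If $\rho=0$, the indicator is constant in $\theta$. This settles the variation claim for both $V$ and $W$.

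For \eqref{eq:geom-outer-loss}, take $s_\pm=R\pm\eps$. A point $\theta$ lies in $W\setminus V$ exactly when
\begin{equation*}
    \frac{a^2-|y|^2-s_-^2}{2s_-}\le\rho\cos(\theta-\phi)<\frac{a^2-|y|^2-s_+^2}{2s_+}.
\end{equation*}
Set $g(s)=(a^2-|y|^2-s^2)/(2s)=(a^2-|y|^2)/(2s)-s/2$. On $[R/2,3R/2]$ the derivative $g'(s)$ equals $-(a^2-|y|^2)/(2s^2)-1/2$.

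\textbf{Main obstacle.} The interval $J_0=[g(s_-),g(s_+))$ has length $|g(s_+)-g(s_-)|\le 2\eps\sup|g'|$, and this is not bounded uniformly in $y$ when $|y|$ is large. Two cases handle it.

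\emph{Large $|y|$.} Only $\theta$ with $x\in W$ matter, so we may assume $|y|<a+s_+\le a+3R/2$. Otherwise the set in \eqref{eq:geom-outer-loss} is empty. With this restriction $|a^2-|y|^2|\le C(a+R)^2$, hence $\sup|g'|\le C'_{R,a}$ and $|J_0|\le C''_{R,a}\eps$.

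\emph{Small $\rho$.} The condition in $\theta$ is $\cos(\theta-\phi)\in J_0/\rho$, an interval of length $|J_0|/\rho$. By \eqref{eq:geom-cosine} the measure of admitted $\theta$ is at most $4\sqrt{2|J_0|/\rho}$, which is not uniform as $\rho\to0$. I would treat small $\rho$ separately. When $\rho\le\sqrt\eps$, the quantity $\rho\cos$ varies within $[-\rho,\rho]$. The loss set is nonempty only if $g(s_-)\le\rho$ and $g(s_+)>-\rho$. Then I would use directly the geometric fact that $W\setminus V$ is contained in a shell region: $x\in W\setminus V$ forces $\big||y+s u_\theta|-a\big|\lesssim\eps$ for both $s=s_\pm$.

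A cleaner route may avoid the dichotomy. Bound the time measure by a direct estimate on the map $\theta\mapsto|y+Ru_\theta|^2=|y|^2+R^2+2R\rho\cos(\theta-\phi)$. The condition $x\in W\setminus V$ implies $|y+Ru_\theta|^2\in[a^2-c\eps,a^2+c\eps]$ with $c=c_{R,a}$ depending only on bounded quantities, since $|y|$ is bounded. Then $\cos(\theta-\phi)$ lies in an interval of length $c\eps/(R\rho)$.

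\begin{itemize}
    \item If $\rho\ge\sqrt\eps$, then \eqref{eq:geom-cosine} gives measure at most $4\sqrt{2c\sqrt\eps/R}$. This is only $O(\eps^{1/4})$, so the threshold must be chosen better.
    \item Instead use $\rho\ge\rho_0$ with $\rho_0$ a fixed constant depending on $R,a$. This yields $O(\sqrt\eps)$ from \eqref{eq:geom-cosine}.
\end{itemize}

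For $\rho<\rho_0$, the value $|y+Ru_\theta|^2$ varies by at most $4R\rho_0$. I would show that $W\setminus V$ is then empty for small $\rho_0$: the difference $g(s_+)-g(s_-)$ is strictly negative, of size $\gtrsim\eps$ (since $g'<-1/2$ once $|y|\le a$, and in general $g'=-(a^2-|y|^2+s^2)/(2s^2)$, negative when $|y|^2<a^2+s^2$). The admissible window for $\rho\cos$ then has reversed orientation and is empty. The delicate regime is $|y|^2\approx a^2+R^2$, where $g'\approx0$. There, $\rho$ small forces $|y_\perp|^2\approx a^2+R^2$ and $g(s)\approx -R$, while $\rho\cos\in[-\rho_0,\rho_0]$, so no $\theta$ qualifies once $\rho_0<R/2$. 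Verifying these cases carefully is the main obstacle.

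The inner estimate \eqref{eq:geom-inner-loss} follows identically with the inequalities reversed. The hypothesis $a<R$ ensures that $a^2-|y|^2+s^2>0$ away from a region where the set is again empty.
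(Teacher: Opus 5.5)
Your architecture matches the paper's. You restrict to bounded $|y|$, show that at a loss time $\cos(\theta-\phi)$ lies in an interval of length $O(\eps/\rho)$ with $\rho=|y_E|$, and then apply \eqref{eq:geom-cosine}. The variation claim and the width bounds are fine.

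\textbf{The gap.} The missing step is exactly the one that makes the estimate uniform in $x$: a lower bound on $\rho$ at loss times, independent of $\eps$. You only sketch this heuristically (``$g(s)\approx-R$'') and flag it yourself as unverified. Your case split also omits the regime $|y|^2>a^2+(R+\eps)^2$. There $g'>0$ on $[s_-,s_+]$ and the outer window is nonempty, and such $y$ do occur, since $|y|$ can be as large as $a+R+\eps$. The remark about $|y_\perp|$ plays no role, because $g$ depends only on $|y|$.

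\textbf{The fix for the outer loss.} The missing idea is one line. Subtracting the squared distances gives
\begin{equation*}
    |y+(R+\eps)u_\theta|^2-|y+(R-\eps)u_\theta|^2=4\eps\bigl(R+\inpro{y}{u_\theta}\bigr).
\end{equation*}
So $x\in W^a_{\eps,\theta}\setminus V^a_{\eps,\theta}$ forces $\inpro{y}{u_\theta}<-R$, hence $\rho>R$, with no case analysis in $|y|$. In your notation this is the identity $g(s_+)-g(s_-)=\tfrac{s_+-s_-}{2}\bigl(\tfrac{|y|^2-a^2}{s_+s_-}-1\bigr)$. The outer window is nonempty only when $|y|^2>a^2+R^2-\eps^2$, and then $g(s_+)<-R$.

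\textbf{The inner loss.} For \eqref{eq:geom-inner-loss}, your justification points to the wrong mechanism. Positivity of $a^2-|y|^2+s^2$, i.e.\ $g'<0$, only says that the inner window $[g(s_+),g(s_-))$ is nonempty; it gives no lower bound on $\rho$. What is needed is that $x\in V^a_{\eps,\theta}$ forces $|y_E+(R-\eps)u_\theta|<a$, hence $\rho>R-\eps-a>(R-a)/2$. This is where $a<R$ and $\eps<(R-a)/2$ are used. With these two lower bounds, your width estimates and \eqref{eq:geom-cosine} give both inequalities as in the paper's proof.
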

\begin{proof}
Fix $x$, and set
\begin{equation*}
    z=x-b,\qquad \rho=|z|,\qquad p=|\operatorname{proj}_E z|,
    \qquad q_\theta=\inpro{z}{u_\theta}.
\end{equation*}
When $p>0$, there is $\phi\in\R$ such that
$q_\theta=p\cos(\theta-\phi)$. Put
\begin{equation*}
    H_\pm=\frac{a^2-\rho^2-(R\pm\eps)^2}{2(R\pm\eps)}.
\end{equation*}
Membership in $V_{\eps,\theta}^{a}$ and $W_{\eps,\theta}^{a}$ is given,
respectively, by $q_\theta<H_-$ and $q_\theta<H_+$.

Suppose that $x\in W_{\eps,\theta}^{a}\setminus V_{\eps,\theta}^{a}$
for some $\theta$. Subtracting the squared distances gives
\begin{equation*}
    4\eps(R+q_\theta)
    =|z+(R+\eps)u_\theta|^2-|z+(R-\eps)u_\theta|^2<0.
\end{equation*}
Thus $p>R$. Also $\rho<a+R+\eps$, and
\begin{equation}\label{eq:geom-threshold-width}
    0<H_+-H_-
    =\frac{\eps(\rho^2-a^2-R^2+\eps^2)}{R^2-\eps^2}
    \le\left(\frac{4a}{R}+2\right)\eps.
\end{equation}
The last bound uses $\eps<R/2$ and $\rho<a+3R/2$.
The loss parameters satisfy $H_-\le q_\theta<H_+$, so the corresponding
interval for $\cos(\theta-\phi)$ has length at most
$(4a/R+2)\eps/R$. Equation~\eqref{eq:geom-cosine} proves
\eqref{eq:geom-outer-loss}.

For the reverse difference, suppose that $a<R$ and
$x\in V_{\eps,\theta}^{a}\setminus W_{\eps,\theta}^{a}$.
Projection onto $E$ gives
\begin{equation*}
    p\ge R-\eps-a>\frac{R-a}{2}.
\end{equation*}
In this case $H_+\le q_\theta<H_-$ and
\begin{equation*}
    0<H_--H_+
    =\frac{\eps(a^2+R^2-\rho^2-\eps^2)}{R^2-\eps^2}
    \le\frac83\eps.
\end{equation*}
After division by $p$, the cosine interval has length at most
$16\eps/(3(R-a))$. Applying \eqref{eq:geom-cosine} proves
\eqref{eq:geom-inner-loss}. In both cases, a loss point must have $p>0$.

Finally, membership in either ball is one inequality in a cosine, or is
constant when $p=0$. Its indicator changes at most twice over one period.
The same variation bound holds on every shorter interval.
\end{proof}

The lower bounds for $p$ make these estimates uniform in the point $x$.
They will also allow us to use the same motion for several balls with
different centres and radii.

\subsection{Balls and spherical shells}\label{subsec:balls-shells}

First let $S=B(c,R)$. Fix $\eps_0=R/10$ and, for $0<\eps<\eps_0$, define
\begin{equation}\label{eq:geom-ball-windows}
    \begin{aligned}
        V_{\eps,\theta}&=S-c-(R-\eps)u_\theta,
        &W_{\eps,\theta}&=S-c-(R+\eps)u_\theta,\\
        Q_\eps&=B(0,\eps/4),
        &U&=B(0,3R).
    \end{aligned}
\end{equation}
Here $\theta\in[0,2\pi]$ and $\d\nu(\theta)=\d\theta/(2\pi)$.
For $x\in Q_\eps$,
\begin{equation*}
    |x+(R-\eps)u_\theta|<R-\frac{3\eps}{4},
    \qquad
    |x+(R+\eps)u_\theta|>R+\frac{3\eps}{4}.
\end{equation*}
Thus $Q_\eps\subset V_{\eps,\theta}\setminus W_{\eps,\theta}$.
All windows lie in $U$. Lemma~\ref{lem:geom-moving-balls}, with $a=R$ and
$b=0$, gives $\omega_\eps\le C_{R,R}\sqrt{\eps}/(2\pi)$ and the variation
bound $J=2$. Theorem~\ref{thm:abstract-windows} proves part~(i) of
Theorem~\ref{thm:main}.

Now let $S=B(c,R)\setminus B(c,a)$, where $0<a<R$. Use the same translates,
the same $Q_\eps$ and $U$, with
\begin{equation*}
    \eps_0=\min\left\{\frac R{10},\frac{R-a}{4}\right\}.
\end{equation*}
For $x\in Q_\eps$, the distance to the centre of the first window satisfies
\begin{equation*}
    a<R-\frac{5\eps}{4}
    <|x+(R-\eps)u_\theta|
    <R-\frac{3\eps}{4}<R.
\end{equation*}
The distance for the second window is greater than $R$, as before.
Thus the common-set condition still holds.

The loss can now arise either from the outer boundary or from the inner hole.
Using the notation \eqref{eq:geom-component-windows} with $b=0$,
\begin{equation}\label{eq:geom-shell-loss}
    W_{\eps,\theta}\setminus V_{\eps,\theta}
    \subset
    \bigl(W_{\eps,\theta}^{R}\setminus V_{\eps,\theta}^{R}\bigr)
    \cup
    \bigl(V_{\eps,\theta}^{a}\setminus W_{\eps,\theta}^{a}\bigr).
\end{equation}
The two parts of Lemma~\ref{lem:geom-moving-balls} therefore give
\begin{equation*}
    \omega_\eps
    \le\frac{C_{R,R}+C_{R,a}}{2\pi}\sqrt{\eps}
    \longrightarrow0\qquad{\rm{as}}\ \eps\to0.
\end{equation*}
The shell indicator is the outer-ball indicator minus the inner-ball
indicator. Its variation is at most $4$.
This proves part~(iii).

\subsection{Finite unions}

We first consider
\begin{equation*}
    S=\bigcup_{j=1}^{m}B(c_j,r_j).
\end{equation*}
The balls may overlap. Remove any duplicates.
The case $m=1$ was proved above, so assume $m\ge2$.
We choose a short boundary arc of one ball that stays a positive distance
from the others.

For a unit vector $u$, put $h_j(u)=\inpro{c_j}{u}+r_j$.
For distinct balls, the equality $h_i(u)=h_j(u)$ has empty interior on the
unit sphere. Choose $u_*$ outside these finitely many equality sets, and
relabel so that $h_1(u_*)>h_j(u_*)$ for all $j\ne1$.
Write $c_0=c_1$ and $R=r_1$. Choose a unit vector orthogonal to $u_*$ to form
the plane $E$, with $e_1=u_*$. By continuity, there are $\alpha\in(0,\pi)$
and $\gamma>0$ such that
\begin{equation}\label{eq:geom-exposed-gap}
    h_1(u_\theta)-h_j(u_\theta)\ge\gamma
    \qquad(-\alpha\le\theta\le\alpha,\ j\ne1).
\end{equation}
In particular, the point $c_0+Ru_\theta$ has distance at least $\gamma$
from every other closed ball. Indeed,
\begin{equation*}
    \inpro{c_0+Ru_\theta-c_j}{u_\theta}
    =r_j+h_1(u_\theta)-h_j(u_\theta)\ge r_j+\gamma.
\end{equation*}

Fix
\begin{equation*}
    \eps_0=\min\left\{\frac R{10},\frac\gamma4\right\},
    \qquad \d\nu(\theta)=\frac{\d\theta}{2\alpha}
    \quad(-\alpha\le\theta\le\alpha),
\end{equation*}
and define
\begin{equation}\label{eq:geom-union-windows}
    \begin{aligned}
        V_{\eps,\theta}&=S-c_0-(R-\eps)u_\theta,
        &W_{\eps,\theta}&=S-c_0-(R+\eps)u_\theta,\\
        Q_\eps&=B(0,\eps/4).
    \end{aligned}
\end{equation}
The first ball gives $Q_\eps\subset V_{\eps,\theta}$.
Its translate in $W_{\eps,\theta}$ misses $Q_\eps$.
For $x\in Q_\eps$, the point $x+c_0+(R+\eps)u_\theta$ is within
$5\eps/4<\gamma$ of $c_0+Ru_\theta$. By \eqref{eq:geom-exposed-gap},
it belongs to none of the other balls. Hence
$Q_\eps\subset V_{\eps,\theta}\setminus W_{\eps,\theta}$.
All windows are contained in the fixed ball
\begin{equation*}
    U=B(0,M+2R),\qquad
    M=\max_{1\le j\le m}\bigl(|c_j-c_0|+r_j\bigr).
\end{equation*}

Let $V_{\eps,\theta,j}$ and $W_{\eps,\theta,j}$ be the translates of the
$j$-th ball appearing in \eqref{eq:geom-union-windows}. We have
\begin{equation}\label{eq:geom-union-loss}
    W_{\eps,\theta}\setminus V_{\eps,\theta}
    \subset
    \bigcup_{j=1}^{m}
    \bigl(W_{\eps,\theta,j}\setminus V_{\eps,\theta,j}\bigr).
\end{equation}
Applying \eqref{eq:geom-outer-loss} with $a=r_j$ and $b=c_j-c_0$ yields
\begin{equation*}
    \omega_\eps
    \le\frac1{2\alpha}\sum_{j=1}^{m}C_{R,r_j}\sqrt{\eps}
    \longrightarrow0\qquad{\rm{as}}\ \eps\to0.
\end{equation*}
There is no restriction on the sizes of $r_j$ relative to $R$ in this estimate.
The variation of a union indicator is at most the sum of the variations of
the component indicators, so $J=2m$ suffices. This proves part~(iv).

We next allow holes, under the separation assumption in part~(v). Write
\begin{equation*}
    S=\bigcup_{j=1}^{m}S_j,\qquad
    S_j=B(c_j,b_j)\setminus B(c_j,a_j),\qquad 0\le a_j<b_j,
\end{equation*}
where $B(c_j,0)=\emptyset$, so $a_j=0$ gives the ball $B(c_j,b_j)$.
The case $m=1$ has already been proved. For $m\ge2$, set
\begin{equation*}
    \gamma=\min_{i\ne j}\operatorname{dist}
       (\overline{S_i},\overline{S_j})>0.
\end{equation*}
Relabel so that $b_1=R:=\max_j b_j$, and put $c_0=c_1$.
Every inner radius is strictly less than $R$, so
\begin{equation*}
    g:=\min_j(R-a_j)>0.
\end{equation*}
Use \eqref{eq:geom-union-windows} along a full great circle, now with
\begin{equation*}
    0<\eps<\eps_0:=\min\left\{\frac R{10},\frac g4,\frac\gamma4\right\},
    \qquad \d\nu(\theta)=\frac{\d\theta}{2\pi}.
\end{equation*}
For $x\in Q_\eps$, the inward translate places
$x+c_0+(R-\eps)u_\theta$ inside $S_1$.
The point $x+c_0+(R+\eps)u_\theta$ lies outside the outer ball of $S_1$
and within $5\eps/4<\gamma$ of $c_0+Ru_\theta\in\overline{S_1}$.
It therefore belongs to none of the other components.
This proves the common-set condition for the entire union.
We may again take $U=B(0,M+2R)$, with
$M=\max_j(|c_j-c_0|+b_j)$.

For each component, its loss is contained in its outer-ball loss together
with its inner-hole loss, as in \eqref{eq:geom-shell-loss}.
Combining this fact with \eqref{eq:geom-union-loss} gives
\begin{equation*}
    \omega_\eps
    \le\frac1{2\pi}\left(
        \sum_{j=1}^{m}C_{R,b_j}
        +\sum_{j:a_j>0}C_{R,a_j}\right)\sqrt{\eps}
    \longrightarrow0\qquad{\rm{as}}\ \eps\to0.
\end{equation*}
The reverse-loss estimate is applicable to every inner hole because
$a_j<R$ and $\eps<(R-a_j)/4$. The variation bound is
$J\le2m+2\#\{j:a_j>0\}\le4m$.
The abstract criterion proves part~(v).

In particular, finitely many concentric radial bands are covered by this
argument after overlapping or touching radial intervals are merged.
The remaining closed bands have positive gaps between them.

\subsection{Convex polytopes}

For part~(ii), we choose a straight path in the relative interior of one
facet. Write the polytope using one inequality for each facet
\begin{equation*}
    P=\bigcap_{i=0}^{m-1}\{x\in\R^d:\inpro{n_i}{x}\le a_i\},
    \qquad |n_i|=1,
\end{equation*}
and let $F_0$ be a facet whose supporting hyperplane is parallel to no other
facet hyperplane. Put $n=n_0$.

Following Schneider \cite{Sch14}, we write
$\operatorname{relint}F_0$ for the \emph{relative interior} of $F_0$,
namely its interior in its affine hull. Since $F_0$ is a facet, this affine
hull is its supporting hyperplane. For an edge of a polygon, the relative
interior is the edge with its two endpoints removed.

To explain the idea, we first consider the right triangle
$P=\{(x,y)\in\R^2:x\ge0,\ y\ge0,\ x+y\le1\}$,
shown in Figure~\ref{fig:triangle-windows}.
Take $F_0=[0,1]\times\{0\}$, $n=(0,-1)$ and
$z(t)=(s(t),0)$, where $s(t)=(1+t)/3$ for $0\le t\le1$.
For $0<\eps<1/20$, define
\begin{equation*}
    \begin{aligned}
        V_{\eps,t}&=P-z(t)+\eps n,
        &W_{\eps,t}&=P-z(t)-\eps n,\\
        Q_\eps&=B(0,\eps/2),
        &U&=B(0,2).
    \end{aligned}
\end{equation*}
All windows lie in $U$, and $Q_\eps\subset V_{\eps,t}\setminus W_{\eps,t}$
for every $t$. The loss region satisfies
\begin{equation*}
    L_{\eps,t}\subset
    \{(x,y):1-s(t)-\eps<x+y\le1-s(t)+\eps\}.
\end{equation*}
Since $s'(t)=1/3$, each fixed point lies in this strip for a set of times
of measure at most $6\eps$.
The inequalities defining $V_{\eps,t}$ are linear in $t$.
Thus the allowed times form an interval, and $J=2$.
If $P$ admits an exponential Riesz basis, condition~(i) also holds.
Theorem~\ref{thm:abstract-windows} then gives a contradiction.

\begin{figure}[H]
    \centering
    \begin{tikzpicture}[x=4.6cm,y=4.6cm,every node/.style={font=\small}]
        \definecolor{triangleblue}{RGB}{35,96,156}
        \definecolor{triangleorange}{RGB}{186,87,26}
        \definecolor{trianglegreen}{RGB}{25,120,62}
        \begin{scope}
            \node at (0.5,1.30) {(a) Path on the edge};
            \draw[black!70,line width=0.9pt] (0,0)--(1,0)--(0,1)--cycle;
            \node[anchor=south east] at (0,1) {$(0,1)$};
            \node[anchor=north east] at (0,0) {$(0,0)$};
            \node[anchor=west] at (1.015,0) {$(1,0)$};
            \node at (0.24,0.44) {$P$};
            \draw[black,line width=1.25pt] (0,0)--(1,0);
            \draw[black,line width=1.8pt,->,>=stealth]
                (1/3,0)--(2/3,0);
            \fill[triangleblue] (1/3,0) circle (0.012);
            \fill[triangleorange] (2/3,0) circle (0.012);
            \node[anchor=north,triangleblue] at (1/3,-0.025) {$z(0)$};
            \node[anchor=north,triangleorange] at (2/3,-0.025) {$z(1)$};
            \node[anchor=south] at (0.5,0.045) {$z(t)$};
            \node[anchor=north] at (0.93,-0.025) {$F_0$};
            \draw[black!65,->,>=stealth,line width=0.8pt]
                (0.5,-0.03)--(0.5,-0.31);
            \node[anchor=west] at (0.52,-0.27) {$n=(0,-1)$};
        \end{scope}
        \begin{scope}[shift={(2.05,0)}]
            \node at (0,1.30) {(b) The two windows};
            \foreach \s/\col in {0.333333/triangleblue,0.666667/triangleorange} {
                \fill[\col!28]
                    (-\s,0.96)--(-\s,1.04)--({1-\s},0.04)
                    --({0.92-\s},0.04)--cycle;
            }
            \foreach \s/\col in {0.333333/triangleblue,0.666667/triangleorange} {
                \draw[\col,line width=0.9pt,dash pattern=on 3pt off 2pt]
                    (-\s,0.04)--({1-\s},0.04)--(-\s,1.04)--cycle;
                \draw[\col,line width=1pt]
                    (-\s,-0.04)--({1-\s},-0.04)--(-\s,0.96)--cycle;
            }
            \draw[gray!70!black,densely dotted,->,>=stealth,line width=0.8pt]
                (-0.34,1.13)--(-0.66,1.13);
            \node[anchor=west,text=gray!70!black] at (-0.27,1.13)
                {$t$ increases};
            \filldraw[fill=trianglegreen!25,draw=trianglegreen,line width=0.8pt]
                (0,0) circle (0.02);
            \draw[trianglegreen,line width=0.5pt]
                (0.015,-0.015)--(0.20,-0.19);
            \node[anchor=west,trianglegreen] at (0.22,-0.19)
                {$Q_\eps$ (fixed)};
            \draw[triangleblue,line width=0.5pt]
                (0.16,0.507)--(0.45,0.69);
            \node[anchor=south,triangleblue,fill=white,inner sep=1pt]
                at (0.45,0.70) {$L_{\eps,0}$};
            \draw[triangleorange,line width=0.5pt]
                (-0.16,0.493)--(-0.51,0.68);
            \node[anchor=south,triangleorange,fill=white,inner sep=1pt]
                at (-0.51,0.69) {$L_{\eps,1}$};
            \node[anchor=west,triangleblue] at (-0.70,-0.17) {$t=0$};
            \node[anchor=west,triangleorange] at (-0.70,-0.28) {$t=1$};
            \draw[line width=1pt] (-0.22,-0.36)--(-0.08,-0.36);
            \node[anchor=west] at (-0.055,-0.36) {$V_{\eps,t}$};
            \draw[line width=0.9pt,dash pattern=on 3pt off 2pt]
                (0.32,-0.36)--(0.46,-0.36);
            \node[anchor=west] at (0.485,-0.36) {$W_{\eps,t}$};
        \end{scope}
    \end{tikzpicture}
    \caption{The right-triangle construction with $\eps=1/25$.
    The path $z(t)$ runs from $(1/3,0)$ to $(2/3,0)$ inside
    $\operatorname{relint}F_0$. The right panel shows
    $V_{\eps,t}=P-z(t)+\eps n$ with solid boundaries and
    $W_{\eps,t}=P-z(t)-\eps n$ with dashed boundaries at $t=0$ (blue)
    and $t=1$ (orange). The green ball is the same $Q_\eps$ at both times.
    The shaded regions are $L_{\eps,t}=W_{\eps,t}\setminus V_{\eps,t}$.
    They move to the left as $t$ increases and lie in strips parallel
    to the hypotenuse.}
    \label{fig:triangle-windows}
\end{figure}

We now return to the general polytope $P$ and the facet $F_0$ fixed above.
For every $i\ne0$, the linear functional
$\tau\mapsto\inpro{n_i}{\tau}$ is nonzero on $n^\perp$.
Otherwise $n_i$ would be parallel to $n$. Finitely many proper linear
subspaces cannot cover $n^\perp$, so there is a unit vector $\tau$ such that
\begin{equation*}
    \inpro{n}{\tau}=0,\qquad
    \inpro{n_i}{\tau}\ne0\quad(i\ne0).
\end{equation*}
Choose $z_0\in\operatorname{relint}F_0$ and $\rho>0$ small enough that
\begin{equation*}
    z(t)=z_0+(2t-1)\rho\tau\in\operatorname{relint}F_0
    \qquad(0\le t\le1).
\end{equation*}
The segment is compact, so
\begin{equation}\label{eq:geom-facet-margin}
    \eta:=\min_{\substack{i\ne0\\0\le t\le1}}
       \bigl(a_i-\inpro{n_i}{z(t)}\bigr)>0.
\end{equation}

Set $\eps_0=\eta/4$. For $0<\eps<\eps_0$, define
\begin{equation}\label{eq:geom-polytope-windows}
    \begin{aligned}
        V_{\eps,t}&=P-z(t)+\eps n,
        &W_{\eps,t}&=P-z(t)-\eps n,\\
        Q_\eps&=B(0,\eps/2),
        &U&=B(0,\operatorname{diam}P+\eps_0).
    \end{aligned}
\end{equation}
A point in either window has the form $p-z(t)\pm\eps n$, with $p,z(t)\in P$.
Its norm is less than $\operatorname{diam}P+\eps_0$, so it lies in $U$. For $x\in Q_\eps$, the point $z(t)-\eps n+x$ satisfies
\begin{equation*}
    \begin{aligned}
        \inpro{n}{z(t)-\eps n+x}&<a_0-\eps/2,\\
        \inpro{n_i}{z(t)-\eps n+x}
        &<a_i-\eta+3\eps/2<a_i\quad(i\ne0).
    \end{aligned}
\end{equation*}
Thus $x\in V_{\eps,t}$. On the other hand,
$\inpro{n}{z(t)+\eps n+x}>a_0+\eps/2$, so $x\notin W_{\eps,t}$.
This proves $Q_\eps\subset V_{\eps,t}\setminus W_{\eps,t}$.

It remains to estimate the loss. For fixed $\eps$, put
\begin{equation*}
    \beta_i(t)=a_i-\inpro{n_i}{z(t)}-\eps\inpro{n_i}{n}.
\end{equation*}
The $i$-th inequality for $W_{\eps,t}$ has right-hand side $\beta_i(t)$,
whereas that for $V_{\eps,t}$ has right-hand side
$\beta_i(t)+2\eps\inpro{n_i}{n}$.
Only indices with $\inpro{n_i}{n}<0$ can therefore contribute to the loss.
More precisely,
\begin{equation}\label{eq:geom-polytope-slabs}
    L_{\eps,t}\subset
    \bigcup_{i:\inpro{n_i}{n}<0}
    \left\{x:
       \beta_i(t)-2\eps|\inpro{n_i}{n}|
       <\inpro{n_i}{x}\le\beta_i(t)\right\}.
\end{equation}
Since $\beta_i'(t)=-2\rho\inpro{n_i}{\tau}\ne0$ for these indices,
a fixed point belongs to the $i$-th slab for a set of parameters of length
at most $\eps|\inpro{n_i}{n}|/(\rho|\inpro{n_i}{\tau}|)$.
With Lebesgue measure on $[0,1]$, we obtain
\begin{equation}\label{eq:geom-polytope-overlap}
    \omega_\eps
    \le\frac\eps\rho
       \sum_{i:\inpro{n_i}{n}<0}
       \frac{|\inpro{n_i}{n}|}{|\inpro{n_i}{\tau}|}
    \longrightarrow0\qquad{\rm{as}}\ \eps\to0.
\end{equation}
For each fixed $x$, membership in $V_{\eps,t}$ is an intersection of
linear inequalities in $t$. Its parameter set is an interval, possibly empty
or a single point. The variation bound is therefore $J=2$.
This proves part~(ii).

Any two distinct facets of a nondegenerate $d$-simplex meet along a
$(d-2)$-dimensional face, so their supporting hyperplanes cannot be parallel.
This includes all nondegenerate triangles.
For a convex polygon, each unoriented edge direction occurs at most twice
when edges are taken to be maximal boundary segments. If the number of edges
is odd, at least one edge has no parallel partner.
These observations justify the corresponding examples in the Introduction.

\subsection{Products and affine images}

We use the same windows to prove the product and affine extensions.

Let $K\subset\R^k$ be Lebesgue measurable with $0<|K|<\infty$.
Replacing $K$ by a Borel representative if necessary, we may assume that it
is Borel. Given the windows for $S$, take
\begin{equation*}
    \begin{aligned}
    \widetilde V_{\eps,t}&=V_{\eps,t}\times K,&
    \widetilde W_{\eps,t}&=W_{\eps,t}\times K,\\
    \widetilde Q_\eps&=Q_\eps\times K,&
    \widetilde U&=U\times K.
    \end{aligned}
\end{equation*}
Here $\widetilde U$ has finite measure, although it may be unbounded.
Assume that $E(\Lambda)$ is a Riesz basis on $S\times K$ with bounds $A,B$.
Since $S$ has nonempty interior and $U$ is bounded, choose a fixed finite cover
$U\subset\bigcup_{j=1}^{m}(S+v_j)$. Then
\begin{equation*}
    \widetilde U\subset\bigcup_{j=1}^{m}
    \bigl((S\times K)+(v_j,0)\bigr).
\end{equation*}
Translations preserve the Riesz bounds on the whole product. Thus,
for every finitely supported coefficient vector $c$,
\begin{equation*}
    \int_{\widetilde U}
    \left|\sum_{\lambda\in\Lambda}c_\lambda
    e^{2\pi i\inpro{\lambda}{(x,y)}}\right|^2\d x\,\d y
    \le mB\norm{c}_{\ell^2(\Lambda)}^2.
\end{equation*}
This defines a common bounded synthesis operator into $L^2(\widetilde U)$.
Both windows are translates of $S\times K$, so condition~(i) holds.
No assertion about bases on the separate factors is needed.

The common set has measure $|\widetilde Q_\eps|=|Q_\eps||K|>0$,
and condition~(ii) follows from the corresponding inclusion for $S$.
Moreover,
\begin{equation*}
    \widetilde W_{\eps,t}\setminus\widetilde V_{\eps,t}
    =L_{\eps,t}\times K,\qquad
    \1_{\widetilde V_{\eps,t}}(x,y)
    =\1_{V_{\eps,t}}(x)\1_K(y).
\end{equation*}
The overlap and variation bounds are unchanged. A common exceptional null
set $N$ for the first factor becomes $N\times K$, which is still null.
Joint measurability is also preserved. Thus all four conditions of
Theorem~\ref{thm:abstract-windows} hold, giving the required contradiction.

Now let $\Phi(x)=Lx+v$ be invertible and affine. Replace $U,Q_\eps$ and both
windows by their images under $\Phi$. If $V_{\eps,t}=S+a_{\eps,t}$, then
\begin{equation*}
    \Phi(V_{\eps,t})=\Phi(S)+La_{\eps,t},
\end{equation*}
and the same identity holds for the second window.
Thus the new windows are translates of $\Phi(S)$.
Their common set has measure
$|\Phi(Q_\eps)|=|\det L|\,|Q_\eps|>0$.
The identity
\begin{equation*}
    \1_{\Phi(E_0)}(\Phi(x))=\1_{E_0}(x)
\end{equation*}
for every measurable $E_0$ preserves both the pointwise variation and the
parameter overlap bounds. Invertible affine maps also preserve null sets.
Applying $\Phi$ to the finite cover gives a finite cover by translates
of $\Phi(S)$. Thus the abstract criterion applies to $\Phi(S)$.
For products, the same proof applies because $|\Phi(\widetilde U)|<\infty$.

This completes the proof of Theorem~\ref{thm:main}.

\section{Weaker assumptions}\label{sec:weaker-assumptions}

In this section, we weaken conditions~(iii) and~(iv) of
Theorem~\ref{thm:abstract-windows}. The preceding proofs do not use this section.

We keep conditions~(i) and~(ii), which give a positive lower bound on the loss
energy. To obtain a contradiction, we need an upper bound that tends to zero.
In the original proof, condition~(iii) controls the loss for each fixed
function. Condition~(iv) lets us approximate the varying coefficient family
by finitely many vectors, with a number bounded independently of $\eps$
at each fixed accuracy.

We first replace these two conditions by a joint estimate. It combines
the weighted loss of a finite-dimensional subspace with the mean square
error in approximating the coefficients by that subspace.
We then keep condition~(iii) and give a replacement for condition~(iv)
that involves only the windows. The last subsection applies this condition
to countable unions of balls.

\subsection{Weighted mean approximation}

Retain the setting and conditions~(i) and~(ii) of
Theorem~\ref{thm:abstract-windows}. Choose the common function
$f_\eps=|Q_\eps|^{-1/2}\1_{Q_\eps}$, and put
\begin{equation*}
    a_\eps=T^*f_\eps,\qquad c_{\eps,t}=X_{\eps,t}^{-1}a_\eps,
    \qquad q_\eps(x)=\int_I\1_{L_{\eps,t}}(x)\,\d\nu(t).
\end{equation*}
The proof in Section~\ref{sec:main-proof} gives
$\norm{c_{\eps,t}}\le A^{-1/2}$ and
$\inpro{Z_{\eps,t}c_{\eps,t}}{c_{\eps,t}}\ge A/B$, where
$Z_{\eps,t}=T^*P_{L_{\eps,t}}T$ and $0\le Z_{\eps,t}\le BI$.
Neither conclusion uses condition~(iii) or~(iv).

\begin{proposition}\label{prop:weighted-mean}
For each $\eps$, let $P_\eps$ be a finite-rank orthogonal projection in $H$,
fixed as $t$ varies.
Choose an orthonormal basis $e_{\eps,1},\ldots,e_{\eps,k_\eps}$ of its range,
and define
\begin{equation}\label{eq:weighted-quantities}
    k_{P_\eps}(x)=\sum_{j=1}^{k_\eps}|Te_{\eps,j}(x)|^2,\quad
    \ell_\eps=\int_U q_\eps(x)k_{P_\eps}(x)\,\d x,\quad
    d_\eps^2=\int_I\norm{(I-P_\eps)c_{\eps,t}}^2\d\nu(t).
\end{equation}
Then
\begin{equation}\label{eq:weighted-mean-bound}
    \sqrt{\frac AB}
    \le\left(\int_I\inpro{Z_{\eps,t}c_{\eps,t}}{c_{\eps,t}}
       \,\d\nu(t)\right)^{1/2}
    \le\frac{\sqrt{\ell_\eps}}{\sqrt A}+\sqrt B\,d_\eps.
\end{equation}
In particular, conditions~(iii) and~(iv) may be replaced by the existence
of such projections with $\ell_\eps\to0$ and $d_\eps\to0$ as $\eps\to0$.
\end{proposition}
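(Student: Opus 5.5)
The plan is to split each coefficient vector as $c_{\eps,t}=P_\eps c_{\eps,t}+(I-P_\eps)c_{\eps,t}$ and to use the triangle inequality for the seminorm $v\mapsto\norm{Z_{\eps,t}^{1/2}v}$, integrated in $L^2(\nu)$. The lower bound in \eqref{eq:weighted-mean-bound} needs no new work. By the remarks preceding the proposition, $\inpro{Z_{\eps,t}c_{\eps,t}}{c_{\eps,t}}\ge A/B$ for every $t$. Integrating against the probability measure $\nu$ and taking square roots gives $\sqrt{A/B}$. Measurability in $t$ follows from the strong measurability remark after the proof of Theorem~\ref{thm:abstract-windows}.

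For the upper bound, Minkowski's inequality in $L^2(\nu)$ gives
\begin{equation*}
    \left(\int_I\norm{Z_{\eps,t}^{1/2}c_{\eps,t}}^2\d\nu\right)^{1/2}
    \le\left(\int_I\norm{Z_{\eps,t}^{1/2}P_\eps c_{\eps,t}}^2\d\nu\right)^{1/2}
    +\left(\int_I\norm{Z_{\eps,t}^{1/2}(I-P_\eps)c_{\eps,t}}^2\d\nu\right)^{1/2}.
\end{equation*}
The second term is handled by $\norm{Z_{\eps,t}}\le B$, which bounds it by $\sqrt B\,d_\eps$.

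The first term is the main step. Write $P_\eps c_{\eps,t}=\sum_j\beta_j(t)e_{\eps,j}$ with $\sum_j|\beta_j(t)|^2\le\norm{c_{\eps,t}}^2\le1/A$. Then
\begin{equation*}
    \norm{Z_{\eps,t}^{1/2}P_\eps c_{\eps,t}}^2
    =\int_{L_{\eps,t}}\Bigl|\sum_j\beta_j(t)Te_{\eps,j}(x)\Bigr|^2\d x.
\end{equation*}
Cauchy--Schwarz in $j$, applied pointwise in $x$, bounds the integrand by $A^{-1}k_{P_\eps}(x)$. Integrating in $t$ and using Tonelli, as in \eqref{eq:main-fixed-energy}, turns $\int_I\1_{L_{\eps,t}}(x)\,\d\nu(t)$ into $q_\eps(x)$. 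This gives the bound $\ell_\eps/A$, so the first term is at most $\sqrt{\ell_\eps}/\sqrt A$.

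The key point is that Cauchy--Schwarz removes the $t$-dependence of the coefficients $\beta_j(t)$ pointwise in $x$ before integrating. This is exactly where a fixed finite-dimensional subspace replaces the finite net of Lemma~\ref{lem:main-inverse-cover}. One should also note that $k_{P_\eps}$ does not depend on the chosen orthonormal basis, since it is the diagonal of the kernel of $TP_\eps T^*$. The joint measurability of $(t,x)\mapsto\1_{L_{\eps,t}}(x)|Te_{\eps,j}(x)|^2$ is inherited from the assumed joint measurability of the windows, which justifies Tonelli.

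For the final assertion, if $\ell_\eps\to0$ and $d_\eps\to0$, the right side of \eqref{eq:weighted-mean-bound} tends to $0$. The left side is the fixed positive number $\sqrt{A/B}$, which is a contradiction. Hence such projections cannot exist for windows satisfying (i) and (ii). The same contradiction as in Theorem~\ref{thm:abstract-windows} follows, without using (iii) or (iv).
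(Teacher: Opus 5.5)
Your proposal is correct and follows the paper's proof essentially step for step. The paper uses the same splitting $c_{\eps,t}=P_\eps c_{\eps,t}+(I-P_\eps)c_{\eps,t}$, the same pointwise Cauchy--Schwarz bound $|TP_\eps c_{\eps,t}(x)|^2\le A^{-1}k_{P_\eps}(x)$ followed by Tonelli to obtain $\ell_\eps/A$, the same bound $Z_{\eps,t}\le BI$ for the remainder, and the same triangle inequality (your Minkowski step in $L^2(\nu)$ is the paper's triangle inequality in $L^2(I\times U,\nu\times\d x)$).
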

\begin{proof}
Write $c_{\eps,t}=P_\eps c_{\eps,t}+(I-P_\eps)c_{\eps,t}$.
By Cauchy--Schwarz,
\begin{equation*}
    |TP_\eps c_{\eps,t}(x)|^2
    \le\norm{P_\eps c_{\eps,t}}^2 k_{P_\eps}(x)
    \le A^{-1}k_{P_\eps}(x).
\end{equation*}
By Tonelli's theorem, the squared norm of
$\1_{L_{\eps,t}}TP_\eps c_{\eps,t}$ in $L^2(I\times U,\nu\times\d x)$
is at most $\ell_\eps/A$. The squared norm of the remaining part is at most
$Bd_\eps^2$, since $Z_{\eps,t}\le BI$.
The triangle inequality in this space proves the upper bound in
\eqref{eq:weighted-mean-bound}. Integrating the positive lower bound on
the loss energy proves the other inequality.
\end{proof}

Here $d_\eps$ measures the mean approximation error, while $\ell_\eps$
measures the weighted loss of the same subspace. The projection may depend
on $\eps$ and on the chosen coefficient family, but not on $t$.
The estimate only tests $q_\eps$ against this subspace and does not require
a uniform bound tending to zero in $x$. In comparison, condition~(iii) gives
\begin{equation*}
    \ell_\eps\le Ck_\eps\omega_\eps.
\end{equation*}
To recover the original argument, fix an accuracy $\delta$. Condition~(iv) and
Lemma~\ref{lem:main-inverse-cover} provide a subspace of dimension at most
$N_\delta$, where $N_\delta$ is independent of $\eps$, with $d_\eps\le\delta$.
First letting $\eps\to0$ and then $\delta\to0$ in
\eqref{eq:weighted-mean-bound} recovers the original contradiction.
A diagonal choice of these subspaces also gives both zero limits in the
proposition. The new estimate allows the dimensions to grow as $\eps\to0$,
provided that the weighted loss still tends to zero.

One can express the weighted condition using a probability measure.
If $s_\eps=\int_U k_{P_\eps}\,\d x>0$, set
$\d\mu_\eps=k_{P_\eps}\,\d x/s_\eps$. Then
\begin{equation*}
    \ell_\eps=s_\eps\int_U q_\eps\,\d\mu_\eps,
    \qquad Ak_\eps\le s_\eps\le Ck_\eps.
\end{equation*}
The factor $s_\eps$ cannot be omitted when the dimension increases.
Decay of $\int_Uq_\eps\,\d\mu_\eps$ alone is therefore not enough.
We must control its product with $s_\eps$, together with the approximation
error for the same subspace.

\subsection{Mean approximation of the windows}

The preceding estimate involves the coefficient family. To apply it using
only information about the windows, we retain conditions~(i)--(iii) and
replace condition~(iv) by the following condition.
For every $\eta>0$ there is an integer $N(\eta)$ such that, for every $\eps$,
the interval $I$ has a measurable partition $E_1,\ldots,E_n$, with
$n\le N(\eta)$, satisfying
\begin{equation}\label{eq:mean-window-partition}
    \operatorname*{ess\,sup}_{x\in U}
    \sum_{j=1}^{n}\int_{E_j}
    |\1_{V_{\eps,t}}(x)-p_{\eps,j}(x)|^2\d\nu(t)\le\eta^2,
    \qquad
    p_{\eps,j}(x)=\frac1{\nu(E_j)}
      \int_{E_j}\1_{V_{\eps,t}}(x)\,\d\nu(t).
\end{equation}
Blocks of zero measure are discarded. The partition may depend on $\eta$
and $\eps$, but it is independent of $x$.
Within each block, $p_{\eps,j}(x)$ is the average of the first-window indicator.
The condition bounds the mean square error from these averages.
It does not count how many times a point enters a window.

\begin{proposition}\label{prop:mean-window-criterion}
The conclusion of Theorem~\ref{thm:abstract-windows} remains valid when
condition~(iv) is replaced by \eqref{eq:mean-window-partition}.
\end{proposition}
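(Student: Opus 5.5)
The plan is to reduce to Proposition~\ref{prop:weighted-mean}, or more directly to the covering argument in the proof of Theorem~\ref{thm:abstract-windows}. Condition~(iv) entered only through the square-variation estimate~\eqref{eq:main-square-variation}, which fed Lemmas~\ref{lem:main-direct-cover} and~\ref{lem:main-inverse-cover}. Under~\eqref{eq:mean-window-partition} I will instead prove a \emph{mean} version of these lemmas. For fixed $\eps$, $\eta$ and the partition $E_1,\dots,E_n$, set
\begin{equation*}
    \bar X_j=T^*M_{p_{\eps,j}}T,
\end{equation*}
where $M_p$ is multiplication by $p$. Define the step family $\bar X_t=\bar X_j$ for $t\in E_j$. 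Then for any $v$, Tonelli gives
\begin{equation*}
    \int_I\norm{(X_{\eps,t}-\bar X_t)v}^2\,\d\nu(t)
    \le C\int_U|Tv|^2\sum_j\int_{E_j}|\1_{V_{\eps,t}}-p_{\eps,j}|^2\,\d\nu\,\d x
    \le C^2\eta^2\norm{v}^2 .
\end{equation*}
Since $0\le p_{\eps,j}\le1$, each $\bar X_j$ is bounded by $C$. The family $\{X_{\eps,t}v\}$ is therefore approximated in $L^2(\nu;H)$ by a function taking at most $N(\eta)$ values.

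Next, I pass to the inverses through the Neumann iteration~\eqref{eq:main-iteration}. Write $y_{n,t}$ for the iterates, with $R_t=I-X_{\eps,t}/B$. Define $\bar y_{n,t}$ by running the same recursion with $R_t$ replaced by $\bar R_t=I-\bar X_t/B$. This is constant on each block $E_j$, so $\bar y_{m,t}$ takes at most $N(\eta)$ values (one per block; no tree branching is needed, since the partition is fixed). The mean error satisfies
\begin{equation*}
    \norm{y_{n+1,t}-\bar y_{n+1,t}}
    \le \norm{R_t(y_{n,t}-\bar y_{n,t})}
    +\tfrac1B\norm{(X_{\eps,t}-\bar X_t)\bar y_{n,t}} .
\end{equation*}
The first term is bounded by $q\norm{y_{n,t}-\bar y_{n,t}}$. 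The second is handled in $L^2(\nu)$ by the estimate above, applied blockwise to the vector $\bar y_{n,t}$, which is fixed on each block. However, $\bar R_t$ need not satisfy $0\le\bar R_t\le qI$, since $p_{\eps,j}$ is not an indicator, so $\norm{\bar y_{n,t}}$ must be controlled separately. I expect this to be the \emph{main obstacle}. My first attempt is to keep the iteration with the true $R_t$ and perturb only the forcing term. A cleaner route is to note that the recursion is finite: with $m$ fixed so that $\rho q^m\le\delta/2$, each $\bar y_{n,t}$ has norm at most $\sum_{k<n}(1+C/B)^k M/B$, a constant depending only on $A,B,C,M,\delta$. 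Propagating the $L^2(\nu)$ errors over these $m$ steps by the triangle inequality in $L^2(\nu;H)$ then gives
\begin{equation*}
    \Bigl(\int_I\norm{c_{\eps,t}-\bar y_{m,t}}^2\,\d\nu\Bigr)^{1/2}
    \le \delta/2+K(A,B,C,M,\delta)\,\eta .
\end{equation*}
Choosing $\eta$ small makes this at most $\delta$.

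Finally, take $P_\eps$ to be the orthogonal projection onto the span of the at most $N(\eta)$ values of $\bar y_{m,t}$. This gives $d_\eps\le\delta$ with $k_\eps\le N(\eta)$ independent of $\eps$. By condition~(iii), $\ell_\eps\le Ck_\eps\omega_\eps\le CN(\eta)\omega_\eps\to0$. Proposition~\ref{prop:weighted-mean} then yields
\begin{equation*}
    \sqrt{A/B}\le\limsup_{\eps\to0}\bigl(\sqrt{\ell_\eps/A}+\sqrt B\,d_\eps\bigr)\le\sqrt B\,\delta .
\end{equation*}
Letting $\delta\to0$ gives the contradiction. Measurability of $t\mapsto\bar y_{m,t}$ is immediate because it is a simple function. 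Measurability of $c_{\eps,t}$ follows from the remark after the proof of Theorem~\ref{thm:abstract-windows}.
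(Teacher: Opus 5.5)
There is a genuine gap in your error propagation, and it sits exactly where you say no tree branching is needed. The fixed-vector estimate $\int_I\norm{(X_{\eps,t}-\bar X_t)v}^2\,\d\nu\le C^2\eta^2\norm{v}^2$ controls a single vector over all of $I$. Your iterate $\bar y_{k,t}$ takes a different value $w_j$ on each block $E_j$. Applying the estimate block by block therefore only yields
\begin{equation*}
    \sum_{j=1}^{n}\int_{E_j}\norm{(X_{\eps,t}-\bar X_j)w_j}^2\,\d\nu(t)
    \le C^2\eta^2\sum_{j=1}^{n}\norm{w_j}^2
    \le nC^2\eta^2\max_j\norm{w_j}^2 .
\end{equation*}
Condition \eqref{eq:mean-window-partition} does not improve this. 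It bounds $\sum_j\int_{E_j}|\1_{V_{\eps,t}}(x)-p_{\eps,j}(x)|^2\,\d\nu(t)$ pointwise in $x$, while the weights $|Tw_j(x)|^2$ change from block to block. So your constant is really $K\sqrt n$ with $n$ up to $N(\eta)$, and the mean error is of order $\eta\sqrt{N(\eta)}$. The hypothesis says nothing about this quantity, and it does not tend to zero even in the model case. When condition~(iv) holds, equal blocks give $N(\eta)\approx J/(4\eta^2)$. For the disk windows no partition does essentially better: a point whose membership arc splits a block of measure at least $1/n$ in half already contributes $1/(4n)$. Thus taking $\eta$ small does not give $d_\eps\le\delta$.

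The missing idea is to refine the accuracy at every iteration step according to how many values have already been produced. This is the tree construction used in the paper. Suppose $z_{k,t}$ takes $L_k$ values of norm at most $\rho$. Set $h_k=(1-q)\delta/(2\sqrt{L_k})$ and use a partition with accuracy $\eta_k=Bh_k/(C\rho)$. For each such value $z$, this gives a step function $b_z(t)=(I-\bar X_j/B)z$ with $\norm{R_tz-b_z(t)}_{L^2(\nu)}\le h_k$; then set $z_{k+1,t}=a/B+b_{z_{k,t}}(t)$. On each level set of $z_{k,t}$, bound the error by the error of that fixed value over all of $I$. This gives $e_{k+1}\le qe_k+\sqrt{L_k}\,h_k=qe_k+(1-q)\delta/2$, hence $e_k\le\delta/2$. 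Meanwhile $L_{k+1}\le L_kN(\eta_k)$ stays independent of $\eps$. The rest of your argument then goes through: the Tonelli estimate, the projection onto the span of the values, the bound $\ell_\eps\le Ck_\eps\omega_\eps$, and Proposition~\ref{prop:weighted-mean}.

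Finally, the obstacle you flagged is not real. $\bar X_j$ is the $\nu$-average of $X_{\eps,t}$ over $E_j$, so $AI\le\bar X_j\le BI$ and $0\le I-\bar X_j/B\le qI$. This keeps every iterate in the ball of radius $\rho$ without your cruder growth bound.
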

\begin{proof}
We first approximate $X_{\eps,t}v$ for fixed $v$. Put
$\overline X_{\eps,j}=T^*M_{p_{\eps,j}}T$, where $M_p$ is multiplication by $p$.
These operators are the averages of $X_{\eps,t}$ on the corresponding blocks,
so $AI\le\overline X_{\eps,j}\le BI$.
For every fixed $v\in H$, Tonelli's theorem and
\eqref{eq:mean-window-partition} give
\begin{equation}\label{eq:mean-direct-estimate}
    \sum_{j=1}^{n}\int_{E_j}
    \norm{(X_{\eps,t}-\overline X_{\eps,j})v}^2\d\nu(t)
    \le C\int_U|Tv(x)|^2
       \sum_{j=1}^{n}\int_{E_j}|\1_{V_{\eps,t}}(x)-p_{\eps,j}(x)|^2
       \d\nu(t)\,\d x
    \le C^2\eta^2\norm v^2.
\end{equation}
Thus the family $\{X_{\eps,t}v:t\in I\}$ has a mean approximation taking
finitely many values. Their number is bounded independently of $\eps$
and uniformly for $v$ in a bounded ball.

We next pass from these fixed vectors to the coefficients involving $X_{\eps,t}^{-1}$.
The vectors in the iteration depend on $t$, so this step needs a separate
argument. Omit $\eps$, and set
$q=1-A/B$, $R_t=I-X_t/B$ and $\rho=\sqrt B/A$.
If $A=B$, then $X_t=AI$ and the coefficient family is constant.
Otherwise $0<q<1$. For $a=a_\eps$, the iterates
$y_{0,t}=0$ and $y_{m+1,t}=a/B+R_ty_{m,t}$ satisfy
\begin{equation*}
    \norm{y_{m,t}}\le\rho,\qquad
    \norm{X_t^{-1}a-y_{m,t}}\le\rho q^m.
\end{equation*}
Given $h>0$ and fixed $z$ with $\norm z\le\rho$,
\eqref{eq:mean-direct-estimate} gives a simple function $b_z(t)$ with
$\norm{R_tz-b_z(t)}_{L^2(\nu)}\le h$.
The number of values is bounded in terms of $h$ and the fixed constants.
We take these values to be $(I-\overline X_{\eps,j}/B)z$.
Their norms are at most $q\rho$.

Fix $\delta>0$. Starting with $z_{0,t}=0$, suppose that $z_{m,t}$ has
$L_m$ values, all of norm at most $\rho$. For each of these fixed values $z$,
choose $b_z$ with accuracy
\begin{equation*}
    h_m=\frac{(1-q)\delta}{2\sqrt{L_m}},
    \qquad z_{m+1,t}=a/B+b_{z_{m,t}}(t).
\end{equation*}
Each $z_{m+1,t}$ still has finitely many values of norm at most $\rho$.
At each fixed step, their number has a bound determined by $\delta$, the
fixed constants, and the bound from the preceding step. It is therefore
independent of $\eps$.
If $e_m=\norm{y_m-z_m}_{L^2(\nu)}$, then
\begin{equation*}
    e_{m+1}\le qe_m+\sqrt{L_m}\,h_m
    =qe_m+(1-q)\delta/2.
\end{equation*}
For each value of $z_m$, we bound the squared error on its level set by
the squared error for that fixed value over all of $I$.
Summing over the $L_m$ values gives the factor $\sqrt{L_m}$ above.
Thus $e_m\le\delta/2$.
Choosing $m$, independently of $\eps$, with $\rho q^m\le\delta/2$ gives a mean $\delta$-approximation
to $c_{\eps,t}$ by at most $L_\delta$ values, where $L_\delta$ is independent
of $\eps$.

Let $P_\eps$ project onto their span. Then $d_\eps\le\delta$ and,
by condition~(iii), $\ell_\eps\le CL_\delta\omega_\eps$.
Proposition~\ref{prop:weighted-mean}, first with $\eps\to0$ and then with
$\delta\to0$, gives a contradiction.
\end{proof}

Thus mean approximation of the windows gives the coefficient approximation
needed in Proposition~\ref{prop:weighted-mean}, while condition~(iii) controls
the weighted loss.

For normalized Lebesgue measure on an interval, condition~(iv) implies
\eqref{eq:mean-window-partition}. Divide the interval into $m$ equal blocks.
A function taking values zero and one with at most $J$ changes is constant
on all but at most $J$ blocks, up to endpoints. Its mean square error
from the block averages is therefore at most $J/(4m)$.
The example below shows that the new condition can hold for a family of
windows with no bound on its variation independent of $\eps$.

\subsection{Countable unions of balls}

We now apply this condition to countable unions of balls.
We first handle a finite number of balls using the estimates from
Section~\ref{sec:3}. The remaining balls will have a small total contribution
to the parameter average. This lets us control the mean error without
counting every entry and exit.

\begin{proposition}\label{prop:countable-balls}
Let $d\ge2$, $R>0$, and
\begin{equation}\label{eq:countable-ball-set}
    S=B(0,R)\cup\bigcup_{j=1}^{\infty}B(b_j,r_j),
    \qquad \sup_j|b_j|<\infty,\qquad
    0<r_j<R/10,\qquad \sum_{j=1}^{\infty}r_j<\infty.
\end{equation}
Suppose that, for some closed interval $I_0$ of positive length at most
$2\pi$ and some orthonormal $e_1,e_2$, the arc
$\Gamma=\{Ru_\theta:\theta\in I_0\}$ satisfies
\begin{equation}\label{eq:countable-exposed-arc}
    \gamma:=\operatorname{dist}\left(
    \Gamma,\overline{\bigcup_{j=1}^{\infty}B(b_j,r_j)}\right)>0,
    \qquad u_\theta=e_1\cos\theta+e_2\sin\theta.
\end{equation}
Then $L^2(S)$ admits no Riesz basis of exponentials.
The same conclusion holds for its invertible affine images, and for
$S\times K$ and their invertible affine images whenever
$K\subset\R^k$ is measurable and $0<|K|<\infty$.
\end{proposition}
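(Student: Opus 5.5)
We follow the finite-union construction of Section~\ref{sec:3} along the exposed arc $\Gamma$ and verify the hypotheses of Proposition~\ref{prop:mean-window-criterion}; that is, conditions~(i)--(iii) of Theorem~\ref{thm:abstract-windows} together with \eqref{eq:mean-window-partition} in place of~(iv).

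\textbf{Windows and conditions~(i)--(iii).} With $\nu$ normalized Lebesgue measure on $I_0$ and $0<\eps<\eps_0:=\min\{R/10,\gamma/4\}$, put
\begin{equation*}
V_{\eps,\theta}=S-(R-\eps)u_\theta,\qquad W_{\eps,\theta}=S-(R+\eps)u_\theta,\qquad Q_\eps=B(0,\eps/4).
\end{equation*}
Since $\sup_j|b_j|<\infty$ and $r_j<R/10$, all windows lie in a fixed ball $U$. Hence the construction at the start of Section~\ref{sec:main-proof} supplies $T$ and condition~(i). For condition~(ii), the large ball gives $Q_\eps\subset V_{\eps,\theta}$. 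The point $x+(R+\eps)u_\theta$ lies outside $B(0,R)$ and within $5\eps/4<\gamma$ of $\Gamma$, so by \eqref{eq:countable-exposed-arc} it avoids every small ball; thus $Q_\eps\cap W_{\eps,\theta}=\emptyset$. For condition~(iii), the loss is contained in the union of the componentwise outer losses, as in \eqref{eq:geom-union-loss}. The constant in \eqref{eq:geom-outer-loss} comes from \eqref{eq:geom-threshold-width}: for $a=r_j\le R/10$, the threshold width is at most $(4a/R+2)\eps\le 3\eps$ and $p>R$, so $C_{R,r_j}$ is bounded uniformly in $j$. Summing over countably many $j$ then fails, however. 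We instead split the components: for $j>N$, a point lies in the $j$-th outer loss only if $x\in W^{r_j}_{\eps,\theta}$ with center at distance $>R$ in the plane, and the arc of $\theta$ for which $x$ lies in the ball $B(b_j-(R+\eps)u_\theta,r_j)$ has length $\lesssim r_j/R$. Hence
\begin{equation*}
\int\1_{L_{\eps,\theta}}(x)\,\d\nu\le C(N+1)\sqrt\eps+C'\sum_{j>N}r_j,
\end{equation*}
and choosing $N=N(\eps)\to\infty$ slowly gives $\omega_\eps\to0$.

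\textbf{Condition~\eqref{eq:mean-window-partition}.} Given $\eta$, fix $N$ with $C'\sum_{j>N}r_j\le\eta^2/2$, independently of $\eps$. Let $G_{\eps,\theta}$ be the union of the translates of $B(0,R)$ and of the first $N$ balls; its indicator has variation at most $2(N+1)$ by Lemma~\ref{lem:geom-moving-balls}. Divide $I_0$ into $m$ equal blocks with $m$ large depending on $N$ and $\eta$. Since block averages minimize mean square error over constants, the error for $\1_{V}$ is at most the error against the block averages of $\1_G$. We bound it by twice that of $\1_G$, which is at most $2(N+1)/(4m)$, plus twice $\int|\1_V-\1_G|^2\,\d\nu\le\int\sum_{j>N}\1_{\text{$j$-th ball}}(x)\,\d\nu\lesssim\sum_{j>N}r_j$. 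This is uniform in $x$ and $\eps$. Proposition~\ref{prop:mean-window-criterion} then gives the contradiction.

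\textbf{Extensions and the main obstacle.} The affine and product extensions are handled exactly as in the last subsection of Section~\ref{sec:3}. The main obstacle is the uniform-in-$x$ bound on the parameter measure of $\{\theta: x\in B(b_j-su_\theta,r_j)\}$ by $Cr_j$. This bound requires the center circle of radius $s\approx R$ to stay far from degenerate. When the projection $p$ of $x-b_j$ onto $E$ is small, membership may hold for all $\theta$. This case is excluded because such $x$ would satisfy $|x-b_j|\approx R$ only if $p\gtrsim R-r_j-\eps\ge R/2$, which gives arc length $\lesssim r_j/(R/2)$ via the chord geometry. We would first try this direct chord estimate rather than \eqref{eq:geom-cosine}.
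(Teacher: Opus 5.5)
Your proposal is correct and is essentially the paper's proof: the same windows along the exposed arc, the split into $B(0,R)$ plus the first $N$ balls (handled by Lemma~\ref{lem:geom-moving-balls} and the variation bound $2(N+1)$) and a tail controlled by the uniform visitation bound $\lesssim r_j/R$, which is exactly the paper's estimate \eqref{eq:small-ball-visitation}, followed by the block-average comparison and Proposition~\ref{prop:mean-window-criterion}. Two small points. The small-$p$ case you flag as the main obstacle never arises: membership forces $p>R-\eps-r_j$, and $r_j<R-\eps$ makes the admissible $\theta$-set an arc of width at most $2\arcsin(r_j/(R\pm\eps))$ for every $x$. Also, because of the factor $2$ from $(a+b)^2\le2a^2+2b^2$, you should choose $N$ with $C'\sum_{j>N}r_j\le\eta^2/4$ rather than $\eta^2/2$.
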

\begin{proof}
Choose $M$ such that $S\subset B(0,M)$, and set
$\eps_0=\min\{R/10,\gamma/4\}$. For $0<\eps<\eps_0$, take
\begin{equation}\label{eq:countable-windows}
    \begin{aligned}
    V_{\eps,\theta}&=S-(R-\eps)u_\theta,&
    W_{\eps,\theta}&=S-(R+\eps)u_\theta,\\
    Q_\eps&=B(0,\eps/4),& U&=B(0,M+2R),
    \end{aligned}
    \qquad \d\nu(\theta)=\frac{\d\theta}{|I_0|}.
\end{equation}
These windows are jointly measurable and lie in $U$.
Assume that $S$ admits a Riesz basis of exponentials.
The synthesis operator from Section~\ref{sec:main-proof} gives condition~(i).
The estimates for $B(0,R)$ give
$Q_\eps\subset B(-(R-\eps)u_\theta,R)$ and exclude $Q_\eps$ from
$B(-(R+\eps)u_\theta,R)$.
For $x\in Q_\eps$, the distance from $x+(R+\eps)u_\theta$ to
$Ru_\theta$ is less than $5\eps/4<\gamma$.
Thus it also lies outside every small ball. This proves condition~(ii).

We first bound the time spent in a small ball. If $0<r<R/10$ and
$9R/10\le\rho\le11R/10$, then, uniformly in $y\in\R^d$,
\begin{equation}\label{eq:small-ball-visitation}
    |\{\theta\in I_0:|\rho u_\theta-y|<r\}|
    \le 2\arcsin(r/\rho)\le\frac{10\pi r}{9R}.
\end{equation}
Indeed, put $p=|\operatorname{proj}_{\operatorname{span}\{e_1,e_2\}}y|$.
If $p=0$, the set is empty. Otherwise write the planar projection as $p u_\phi$.
The allowed directions satisfy
\begin{equation*}
    \cos(\theta-\phi)>
    \frac{\rho^2+|y|^2-r^2}{2\rho p}
    \ge\frac{\rho^2+p^2-r^2}{2\rho p}
    \ge\sqrt{1-r^2/\rho^2}.
\end{equation*}
This gives the first inequality in \eqref{eq:small-ball-visitation}.
The second follows from $\arcsin s\le\pi s/2$ for $0\le s\le1$.

Let $S_N=B(0,R)\cup\bigcup_{j=1}^{N}B(b_j,r_j)$, and let
$V^N_{\eps,\theta},W^N_{\eps,\theta}$ denote its corresponding translates.
Applying \eqref{eq:small-ball-visitation} to each remaining ball and summing gives
\begin{equation}\label{eq:countable-tail}
    \sup_{x\in U}\int_{I_0}
    \1_{V_{\eps,\theta}\setminus V^N_{\eps,\theta}}(x)\,\d\nu(\theta)
    \le\tau_N:=\frac{10\pi}{9R|I_0|}\sum_{j>N}r_j\longrightarrow0.
\end{equation}
The same bound holds with $W$ in place of $V$, uniformly in $\eps$.
This one estimate controls both the loss and the mean approximation.
For the loss, use
\begin{equation*}
    W_{\eps,\theta}\setminus V_{\eps,\theta}
    \subset (W^N_{\eps,\theta}\setminus V^N_{\eps,\theta})
       \cup(W_{\eps,\theta}\setminus W^N_{\eps,\theta}),
\end{equation*}
Lemma~\ref{lem:geom-moving-balls} gives
$\omega_\eps\le C_N\sqrt\eps+\tau_N$, with $C_N$ independent of $\eps$.
First letting $\eps\to0$ and then $N\to\infty$ proves condition~(iii).

For the mean approximation, divide $I_0$ into
$m$ equal intervals and let $\mathcal A_m$ take the average on each interval.
For fixed $x$, write
$g(\theta)=\1_{V_{\eps,\theta}}(x)$ and
$g_N(\theta)=\1_{V^N_{\eps,\theta}}(x)$.
The finite union has variation at most $2(N+1)$, so
\begin{equation}\label{eq:countable-mean-approximation}
    \norm{g-\mathcal A_m g}_{L^2(\nu)}
    \le\norm{g_N-\mathcal A_m g_N}_{L^2(\nu)}
       +\norm{g-g_N}_{L^2(\nu)}
    \le\sqrt{\frac{N+1}{2m}}+\sqrt{\tau_N}.
\end{equation}
Here $I-\mathcal A_m$ is an orthogonal projection, and the second term is
bounded by \eqref{eq:countable-tail}. Given $\eta>0$, first choose $N$ and
then $m$ so that the right-hand side is at most $\eta$.
Both choices are independent of $\eps$ and $x$.
Proposition~\ref{prop:mean-window-criterion} now gives the contradiction.

The product and affine arguments from Section~\ref{sec:3} apply without change.
The product windows have indicators multiplied by $\1_K(y)$,
so \eqref{eq:mean-window-partition} also holds.
\end{proof}

The next example compares the two conditions for one specified family of
windows. The small disks lie along a circular arc. As $\eps$ decreases,
the point $x_0$ used below enters arbitrarily many of their translates.

\begin{example}\label{ex:countable-disks}
In $\R^2$, take $R=1$, $I_0=[0,1]$, and
\begin{equation}\label{eq:explicit-countable-disks}
    x_0=(4,0),\qquad \theta_j=2^{-j},\qquad r_j=2^{-4j-20},
    \qquad
    S=B(0,1)\cup\bigcup_{j=1}^{\infty}B(x_0+u_{\theta_j},r_j).
\end{equation}
Then $L^2(S)$ admits no Riesz basis of exponentials.
The windows \eqref{eq:countable-windows} satisfy
\eqref{eq:mean-window-partition} but not condition~(iv).
\end{example}
\begin{proof}
The small disks have pairwise disjoint closures and are separated from the
unit disk. Indeed, for $j<k$, the distance between their centres is at least
$\theta_j/4$, while $r_j+r_k\le3\theta_j/32$.
Their centres converge to $(5,0)$ and their radii have finite sum.
They stay a positive distance from the whole unit circle.
Thus Proposition~\ref{prop:countable-balls} applies.
This bounded open set has infinitely many connected components, so it is
not one of the finite unions in Theorem~\ref{thm:main}.

We show that the windows \eqref{eq:countable-windows}, with $I_0=[0,1]$,
have no variation bound independent of $\eps$.
Put $g_j=3\cdot2^{-(j+2)}$ for $j\ge0$.
Given $M\ge1$, choose $0<\eps<r_M/8$ and
$x\in B(x_0,r_M/8)$. For $1\le j\le M$,
\begin{equation*}
    |x+(1-\eps)u_{\theta_j}-(x_0+u_{\theta_j})|
    \le |x-x_0|+\eps<r_M/4<r_j.
\end{equation*}
Hence $x\in V_{\eps,\theta_j}$. At $g_j$, where $0\le j\le M$,
write $\Delta=|g_j-\theta_k|$ for any $k\ge1$.
The definitions of $g_j$ and $\theta_k$ give
$\Delta\ge\theta_k/4$ and $\Delta\ge2^{-(M+2)}$.
Since $\Delta<1$, we have
$|u_{g_j}-u_{\theta_k}|\ge\Delta/2$ and
$r_k\le\theta_k/16\le\Delta/4$. It follows that
\begin{equation*}
    |x+(1-\eps)u_{g_j}-(x_0+u_{\theta_k})|-r_k
    \ge\Delta/4-|x-x_0|-\eps
    >2^{-(M+4)}-r_M/4>0.
\end{equation*}
Also $|x+(1-\eps)u_{g_j}|\ge4-r_M/8-1>1$, so the unit disk does not
contribute at these times. Thus $x\notin V_{\eps,g_j}$ for $0\le j\le M$.
Along the increasing sequence
\begin{equation*}
    g_M<\theta_M<g_{M-1}<\theta_{M-1}<\cdots<g_1<\theta_1<g_0,
\end{equation*}
the indicator of $V_{\eps,\theta}$ alternates between zero and one.
The sum in \eqref{eq:main-turnover} is therefore $2M$.
This holds throughout the positive-measure disk
$B(x_0,r_M/8)$. Changing representatives on null sets cannot remove this failure.

Since $\sum_j r_j<\infty$, \eqref{eq:countable-mean-approximation} holds
uniformly in $\eps$. Thus the mean condition is strictly weaker than
condition~(iv) for these windows.
\end{proof}

\begin{remark}\label{rem:countable-original-criterion}
Example~\ref{ex:countable-disks} shows that the mean condition is weaker than
condition~(iv) for the specified windows on $[0,1]$.
The set itself can still be treated by Theorem~\ref{thm:abstract-windows}.
Indeed, restrict the parameter interval to $I_1=[3/4,1]$ and take
$\d\nu_1=4\,\d\theta$ there. For $j<k$ and $\theta,\phi\in I_1$,
projection onto $u_{(\theta+\phi)/2}$ gives
\begin{equation*}
    |u_{\theta_j}-u_{\theta_k}-(1-\eps)(u_\theta-u_\phi)|
    \ge\sin(3/8)|u_{\theta_j}-u_{\theta_k}|>r_j+r_k.
\end{equation*}
Thus, for each fixed $\eps$, different small disks cover disjoint regions
as the windows move over $I_1$. Each point meets at most one small disk.
Including the unit disk gives the variation bound $J=4$.
Conditions~(i) and~(ii) are unchanged, and the new overlap bound is at most
$4\omega_\eps\to0$. A linear change of parameter from $I_1$ to $[0,1]$
then allows us to apply Theorem~\ref{thm:abstract-windows}.
\end{remark}

\section*{Acknowledgement}
The author sincerely thanks Professor Jingwei Guo for helpful discussions.
The author is supported by the National
Natural Science Foundation of China No.~12341102.

\begingroup
\def\L{{\fontencoding{T1}\selectfont\symbol{138}}}

\endgroup

\end{document}